\let\mathcal\mathscr
\numberwithin{equation}{section}
\newcommand{\QQ}{\mathbb{Q}}
\newcommand{\PP}{\mathbb{P}}
\newcommand{\RR}{\mathbb{R}}
\newcommand{\CC}{\mathbb{C}}
\newcommand{\NN}{\mathbb{N}}
\newcommand{\ZZ}{\mathbb{Z}}
\newcommand{\sfl}{\mathsf{\Lambda}}
\newcommand{\bnu}{\boldsymbol{\nu}}
\newcommand{\bal}{\boldsymbol{\alpha}}
\newcommand{\bbe}{\boldsymbol{\beta}}
\newcommand{\bep}{\boldsymbol{\epsilon}}
\newcommand{\bxi}{\boldsymbol{\xi}}
\newcommand{\bta}{\boldsymbol{\tau}}
\newtheorem{thm}{Theorem}
\newtheorem{lem}{Lemma}
\newcommand{\bd}{\mathbf{d}}
\newcommand{\y}{\mathbf{y}}
\newcommand{\z}{\mathbf{z}}
\renewcommand{\v}{\mathbf{v}}
\renewcommand{\rho}{\varrho}
\renewcommand{\leq}{\leqslant}
\renewcommand{\le}{\leqslant}
\renewcommand{\geq}{\geqslant}
\newcommand{\x}{{\bf x}}
\newcommand{\ma}{\mathbf}
\newcommand{\ve}{\varepsilon}
\newcommand{\mcal}{\mathcal}
\newcommand{\lab}{\label}
\newcommand{\al}{\alpha}
\newcommand{\D}{\Delta}
\newcommand{\be}{\beta}
\newcommand{\la}{\lambda}
\renewcommand{\d}{\mathrm{d}}
\renewcommand{\leq}{\leqslant}
\renewcommand{\geq}{\geqslant}
\theoremstyle{definition}
\DeclareMathOperator{\vol}{vol}
\DeclareMathOperator{\Mod}{mod}
\renewcommand{\bmod}[1]{\,(\Mod{ #1})}
\newcommand{\mnu}{\boldsymbol{\nu}}
\begin{document}

\title[]{The divisor problem for binary cubic forms}

\author[]{T.D. Browning}

\address{School of Mathematics\\
University of Bristol\\ Bristol BS8 1TW\\
United Kingdom}

\email{t.d.browning@bristol.ac.uk}

\subjclass{11N37 (11D25)}

\date{\today}

\begin{abstract}
We investigate the average order of the divisor function at values of
binary cubic forms that are reducible over $\QQ$ and discuss some
applications.
\end{abstract}

\maketitle

\tableofcontents

\section{Introduction}

This paper is motivated by the well-known problem of studying the
average order of the divisor function 
$\tau(n)=\sum_{d\mid n}1$, as it ranges over the values taken by
polynomials. 
Our focus is upon the case of binary forms 
$C\in \ZZ[x_1,x_2]$ of degree $3$, the treatment of degree
$1$ or $2$ being essentially trivial.

We wish to understand the behaviour of the sum
$$
  T(X;C)=\sum_{x_1,x_2\leq X}\tau(C(x_1,x_2)),
$$
as $X\rightarrow \infty$. The hardest case is when 
$C$ is irreducible over $\QQ$ with  non-zero discriminant, a situation
first handled by Greaves \cite{greaves}. He establishes the existence of 
constants $c_0,c_1\in\RR$, with $c_0>0$, such that
$$
T(X;C)= c_0X^2\log X+c_1X^2 +O_{\ve, C}(X^{2-\frac{1}{14}+\ve}),
$$
for any $\ve>0$.  
Here, as throughout our work, any dependence in the implied constant
will be indicated explicitly by an appropriate subscript. 
This was later improved  by Daniel \cite{D99}, who sharpened the
exponent $2-\frac{1}{14}+\ve$ to $2-\frac{1}{8}+\ve$.
Daniel also achieves asymptotic information about the 
sum associated to irreducible binary forms of degree $4$, which 
is at the limit of what is currently possible.

Our aim 
is to investigate the corresponding sums $T(X)=T(X;L_1L_2L_3)$
when $C$ is assumed to factorise as a product of  
linearly independent linear forms $L_1,L_2,L_3\in \ZZ[x_1,x_2]$.
In doing so we will gain a respectable improvement in the quality
of the error term apparent in the work of Greaves and Daniel. 
The following result will be established in \S
\ref{s:4}.

\begin{thm}\lab{t:d}
For any $\varepsilon>0$ there exist constants $c_0,\ldots,c_3\in
\RR$, with $c_0>0$, such that 
$$
T(X)
= \sum_{i=0}^3c_iX^2(\log X)^{3-i}+
O_{\ve, L_1,L_2,L_3}( X^{2-\frac{1}{4}+\ve}).
$$
\end{thm}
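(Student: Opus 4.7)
The plan is to apply the Dirichlet hyperbola method to the divisor function $\tau(L_1 L_2 L_3(\mathbf{x}))$, taking full advantage of the factorisation of $C$ to decompose each divisor prime-by-prime across the three linear forms. First I would apply a $\mathrm{GL}_2(\QQ)$ change of variables so that $\{L_1, L_2\}$ becomes the coordinate basis; after absorbing the resulting denominators into the implied constants, we may assume that $L_1(\mathbf{x}) = x_1$, $L_2(\mathbf{x}) = x_2$ and $L_3(\mathbf{x}) = \alpha x_1 + \beta x_2$ with $\alpha\beta\Delta \ne 0$, where $\Delta$ is the determinant of the change of variables. The box $[1,X]^2$ is then mapped to a bounded polygonal region $R_X$ with $\vol(R_X) \asymp X^2$.

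Writing $\tau(n) = 2\sum_{d \mid n,\, d \leq \sqrt{n}} 1 - \delta_{n = \square}$ and swapping summation gives
$$
T(X) = 2\sum_{d} N(d;X) + O\bigl(X^{1+\varepsilon}\bigr), \qquad N(d;X) = \#\bigl\{\mathbf{x}\in R_X : d \mid x_1 x_2 L_3(\mathbf{x}),\ d^2 \leq |x_1 x_2 L_3(\mathbf{x})|\bigr\},
$$
where the $O(X^{1+\varepsilon})$ absorbs the contribution of $\mathbf{x}$ for which $C(\mathbf{x})$ is a perfect square. For each divisor $d$ of $x_1 x_2 L_3(\mathbf{x})$ I would decompose $d = d_1 d_2 d_3 / g$ where $d_i \mid L_i(\mathbf{x})$ and $g$ is a M\"obius-type correction whose size is controlled by the resultants $\Res(L_i, L_j)$. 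For ``generic'' pairwise coprime triples $(d_1, d_2, d_3)$ coprime to the discriminant of $C$, the simultaneous congruences $d_1 \mid x_1$, $d_2 \mid x_2$, $d_3 \mid \alpha x_1 + \beta x_2$ define a sublattice $\Lambda = \Lambda(d_1,d_2,d_3) \subset \ZZ^2$ of index $d_1 d_2 d_3$, and elementary counting gives
$$
\#(R_X \cap \Lambda) = \frac{\vol(R_X)}{d_1 d_2 d_3} + O\!\left(\frac{X}{\lambda_1(\Lambda)} + 1\right).
$$
Summing the main volume term against the natural multiplicative weights produces a triple Dirichlet series with a triple pole at $s = 1$, from which Perron-type analysis extracts the expected main term $\sum_{i=0}^{3} c_i X^2 (\log X)^{3-i}$.

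The hardest part will be the error term: the naive bound $O(X/\lambda_1(\Lambda) + 1)$ summed over all triples with $d_1 d_2 d_3 \leq X^{3/2}$ is much weaker than the target $X^{7/4+\varepsilon}$. The saving of $X^{1/4 - \varepsilon}$ comes from careful organisation of the divisor ranges: since $d_1 d_2 d_3 \leq X^{3/2}$, at least one $d_i$ must lie in the short range $d_i \leq X^{1/2}$, and a secondary hyperbola can be applied to the remaining pair to force another of them into a controlled interval, leaving only one ``long'' variable. Combining this with bounds for $1/\lambda_1(\Lambda)$ coming from the geometry of numbers applied to this family of sublattices, and treating the degenerate contributions from non-coprime triples by exploiting the rigidity imposed by $\Res(L_i, L_j)$, one arrives at a total error of size $O(X^{7/4+\varepsilon}) = O(X^{2 - 1/4 + \varepsilon})$. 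The delicate balancing of divisor ranges against the lattice-counting error, together with the bookkeeping of the M\"obius corrections, is the technical heart of the argument.
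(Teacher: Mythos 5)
Your approach diverges from the paper's in a way that leaves a genuine gap, centred on how the divisor function of the product is decomposed. The paper does \emph{not} apply the hyperbola method to $\tau(L_1L_2L_3)$ directly and then try to split a single divisor $d$ of the product. Instead it first invokes the identity
$$
\tau(n_1n_2n_3)=\sum_{\substack{\ma{e}\in\NN^3\\ e_ie_j\mid n_k}}\frac{\mu(e_1e_2)\mu(e_3)}{2^{\omega(\gcd(e_1,n_1))+\omega(\gcd(e_2,n_2))}} \tau\Big(\frac{n_1}{e_2e_3}\Big)\tau\Big(\frac{n_2}{e_1e_3}\Big)\tau\Big(\frac{n_3}{e_1e_2}\Big)
$$
(from \cite[Lemma 10]{L1L2Q}), which expresses $\tau(L_1L_2L_3)$ as a weighted sum of \emph{products} $\tau(L_1/d_1)\tau(L_2/d_2)\tau(L_3/d_3)$, reducing $T(X)$ exactly to the auxiliary sums $S(X;\ma{d},\ma{D})$ of Theorem~\ref{main1}. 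Only then is the hyperbola method applied, to each $\tau$ factor separately, producing eight subsums in which every divisor is at most $\sqrt{X'}$. Your sketch replaces this by a single hyperbola step on $\tau(L_1L_2L_3)$ followed by a post hoc factorisation $d=d_1d_2d_3/g$. That factorisation is not well defined: for a given divisor $d$ of $n_1n_2n_3$ there may be many ways to allocate prime factors among $d_1,d_2,d_3$, with multiplicity depending on $\x$, and a ``M\"obius-type correction $g$ controlled by resultants'' does not resolve this; the $2^{-\omega(\gcd)}$ weights in the identity above are precisely what is needed and are not something one can wave away.

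The second and more quantitative problem is the size of the ranges. Applying the hyperbola once gives only $d\le (L_1L_2L_3(\x))^{1/2}\ll X^{3/2}$, and hence $d_1d_2d_3\ll X^{3/2}$ with each individual $d_i$ potentially as large as $X$. The level-of-distribution input (Lemma~\ref{LOD}) has error of shape $MX(\sqrt{Q}+\max Q_i)+Q$; with $\max Q_i\asymp X$ this is useless, and merely noting that at least one $d_i\le X^{1/2}$ does not help because the error is governed by the largest $Q_i$. The paper's triple hyperbola ensures $Q_1=Q_2=Q_3=\sqrt{X'}$, so $\max Q_i=\sqrt{X'}$ and $\sqrt{Q}=X'^{3/4}$, delivering precisely $O_\varepsilon(X^{7/4+\varepsilon})$. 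Your proposed ``secondary hyperbola on the remaining pair'' is the right instinct but is not developed, and in fact cannot be carried out cleanly without first having the decomposition of $\tau(L_1L_2L_3)$ into a product of three separate $\tau$'s --- which is exactly what the identity supplies. In short, the missing idea is to decompose $\tau$ of the product \emph{before} summing over $\x$, not after; without it the error-term analysis does not close.
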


Our proof  draws heavily on a series of joint papers of
the author with la
Bret\`eche \cite{4linear, L1L2Q}. These involve 
an analysis of the more
exacting situation wherein $\tau(L_1L_2L_3)$ is replaced by 
$r(L_1L_2L_3L_4)$ or $\tau(L_1L_2Q)$, for an irreducible binary quadratic 
form $Q$.

One of the motivations for studying the divisor problem for binary
forms is the relative lack of progress  for the
divisor problem associated to polynomials in a single variable. 
It follows from work of Ingham \cite{ing} that
$$
\sum_{n\leq X} \tau(n)\tau(n+h)\sim 
\frac{6}{\pi^2}\sigma_{-1}(h)X(\log X)^2
$$
as $X
\rightarrow \infty$, for given $h \in \NN$.
Exploiting connections with Kloosterman sums, Estermann  \cite{est}
obtained a cleaner asymptotic expansion with
a reasonable degree of uniformity in $h$. Several authors have since 
revisited this problem achieving asymptotic formulae 
with $h$ in an increasingly
large range compared to $X$. The best results in the literature are
due to Duke, Friedlander and Iwaniec \cite{dfi} and to 
Motohashi \cite{moto}.

A successful analysis of the sum 
$$
T_h(X)=\sum_{n\leq X}\tau(n-h)\tau(n)\tau(n+h),
$$
has not yet been forthcoming for a single positive integer $h$. It is
conjectured that $T_h(X)\sim c_h X(\log X)^3$
as $X \rightarrow \infty$, for a suitable constant $c_h>0$. A
straightforward heuristic analysis based on the underlying Diophantine
equations suggests that one should take
\begin{equation}
  \label{eq:ch}
  c_h=\frac{11}{8}f(h)\prod_{p}\Big(1-\frac{1}{p}\Big)^2\Big(1+\frac{2}{p}\Big),
\end{equation}
where $f$ is given multiplicatively by $f(1)=1$ and 
\begin{equation} \label{eq:cf}
  f(p^\nu)=\begin{cases}
(1+\frac{2}{p})^{-1}(1-\frac{1}{p})^{-2}(1+\frac{4}{p}+\frac{1}{p^2}-\frac{3\nu+4}{p^{\nu+1}}
-\frac{4}{p^{\nu+2}}+\frac{3\nu+2}{p^{\nu+3}}), & \mbox{if
  $p>2$,}\\
\frac{52}{11}-\frac{41+15\nu}{11\times 2^{\nu}}, 
& \mbox{if
  $p=2$,}
\end{cases}
\end{equation}
for $\nu \geq 1$.
In the following result we
provide some support for this expectation. 

\begin{thm}\lab{t:unary}
Let $\varepsilon>0$ and let 
$H\geq X^{\frac{3}{4}+\ve}$. Then we have  
$$
\sum_{h\leq H}\big( T_h(X)-c_hX(\log X)^3\big) =o(HX(\log X)^3).
$$
\end{thm}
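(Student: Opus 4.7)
The plan is to reverse the order of summation, identifying the sum on the left as a divisor sum for a reducible binary cubic form. Writing
\[
\sum_{h \leq H} T_h(X)
= \sum_{\substack{h \leq H \\ h < n \leq X}} \tau(n-h)\,\tau(n)\,\tau(n+h)
= \sum_{\substack{x_1 \leq X,\, x_2 \leq H \\ x_2 < x_1}} \tau\bigl(C(x_1,x_2)\bigr),
\]
where $C(x_1,x_2) = (x_1 - x_2)\,x_1\,(x_1 + x_2)$ factors as a product of three pairwise linearly independent linear forms; this is precisely the type of sum treated in Theorem~\ref{t:d}, save for the asymmetry of the summation range.

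The first task is thus to prove a rectangular analogue of Theorem~\ref{t:d}: for $1 \leq H \leq X$,
\[
\sum_{\substack{x_1 \leq X \\ x_2 \leq H}} \tau\bigl(C(x_1,x_2)\bigr) = X H \sum_{i=0}^3 \kappa_i (\log X)^{3-i} + O_\varepsilon\bigl(X^{2-\frac14+\varepsilon}\bigr).
\]
The argument in \S\ref{s:4} handles one variable at a time and applies shifted-convolution-type estimates in the other; the inner estimates are insensitive to the shape of the outer range, so restricting $x_2$ to $[1,H]$ only reshapes the main-term integral, while the error term, governed by the larger variable $x_1$, remains of size $X^{2-\frac14+\varepsilon}$. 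The leading coefficient $\kappa_0$ will factor as a product of a singular series $\prod_p \sigma_p(C)$ and an archimedean density that scales linearly with the area $XH$.

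The second task, which I expect to be the main obstacle, is to match this main term with the predicted expression $X(\log X)^3 \sum_{h \leq H} c_h$. A routine multiplicative-function estimate applied to \eqref{eq:ch}--\eqref{eq:cf} yields $\sum_{h \leq H} c_h = \gamma H + O(H^{1-\delta})$ for some computable $\gamma > 0$ and $\delta > 0$. The identity $\kappa_0 = \gamma$ must then be verified prime by prime: at each $p$, the density $\sigma_p(C)$, sliced by the value of $x_2 = h$ and summed against $h$, must reproduce the Euler factor prescribed by \eqref{eq:cf}. The anomalous formula at $p = 2$ in \eqref{eq:cf} reflects the $2$-adic behaviour of $(x_1 - x_2)(x_1 + x_2)$ and will require a separate hand-computation.

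Once $\kappa_0 = \gamma$ is verified, the left-hand side of Theorem~\ref{t:unary} is bounded by the secondary main-term contributions $O(H X (\log X)^2)$ together with the error term $O_\varepsilon\bigl(X^{2-\frac14+\varepsilon}\bigr)$, both of which are $o\bigl(H X (\log X)^3\bigr)$ precisely when $H \geq X^{3/4+\varepsilon}$, accounting for the hypothesis. Boundary contributions from the degenerate strip $x_1 = x_2$, where $C$ vanishes, amount to $O(H^2 (\log X)^3) = o(H X (\log X)^3)$ when $H = o(X)$, and are absent in the remaining range since the rectangle is then the full square.
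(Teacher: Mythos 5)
Your opening identity is false: you write
\[
\sum_{\substack{h\leq H\\ n\leq X}}\tau(n-h)\,\tau(n)\,\tau(n+h)
=\sum_{\substack{x_1\leq X,\,x_2\leq H}}\tau\bigl(C(x_1,x_2)\bigr),
\qquad C=(x_1-x_2)x_1(x_1+x_2),
\]
but $\tau(abc)\neq\tau(a)\tau(b)\tau(c)$ in general, so $\Sigma_1=\sum_{h\leq H}T_h(X)$ is \emph{not} a divisor sum for the cubic form $C$. It is instead a sum of products of three separate divisor functions, i.e.\ precisely the auxiliary sum $S(X)$ of \eqref{eq:Sj} with $d_i=D_i=1$ and $L_1=x_1-x_2$, $L_2=x_1$, $L_3=x_1+x_2$, taken over a rectangle. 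Consequently Theorem~\ref{t:d} is the wrong tool to invoke: that theorem (and the machinery of \S\ref{s:4}, including the M\"obius-type identity decomposing $\tau(n_1n_2n_3)$) is exactly what one needs to pass \emph{from} a product of three $\tau$'s \emph{to} $\tau$ of the product, and applying it here would be circular or simply inapplicable. The correct object is the easier one: a direct application of the arguments behind Theorem~\ref{main1} / formula \eqref{eq:24}, adapted to a lopsided region $(0,X]\times(0,H]$, with no need for the reduction of \S\ref{s:4} at all.

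Once that misidentification is corrected, the rest of your plan does essentially coincide with the paper's. The paper writes the left-hand side as $\Sigma_1-\Sigma_2$, evaluates $\Sigma_2=\sum_h c_hX(\log X)^3$ by elementary multiplicative-function summation (your "routine multiplicative estimate applied to \eqref{eq:ch}--\eqref{eq:cf}", giving $cXH(\log X)^3+O(XH^{1/2}(\log X)^3)$), and evaluates $\Sigma_1$ by tracing through the proof of \eqref{eq:24} for the rectangle, obtaining $XH(\log X)^3\prod_p\sigma_p+O_\ve(XH(\log X)^2+X^{1/2+\ve}H+X^{7/4+\ve})$; it then verifies $\prod_p\sigma_p=c$ prime by prime (including the separate hand-computation at $p=2$ that you anticipate, where the resultant $\Delta_{1,3}=2$ is responsible for the anomaly in \eqref{eq:cf}). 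Your observation that the secondary main-term contribution $O(XH(\log X)^2)$ and the error $O(X^{7/4+\ve})$ are what force the hypothesis $H\geq X^{3/4+\ve}$ is also exactly what appears in the paper. So: one real conceptual error at the start, but the strategy behind it is on target once the sum is identified correctly; you should also confirm that the level-of-distribution input (Lemma~\ref{LOD}) adapts to the lopsided rectangle with error $X^{1/2+\ve}H+X^{7/4+\ve}$ rather than taking it on trust, since the boundary length of $\mcal{R}_{\ma{d}}(X,H)$ is now of order $X$ rather than $\max\{X,H\}^{1/2}$ of the area.
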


This result will be established in \S \ref{s:5}, where 
we will see that $HX(\log X)^3$ represents the true order of magnitude
of the two sums  on the left hand side.
It would be interesting to reduce the lower bound for $H$ assumed in 
this result. 

Throughout our work it will be convenient to reserve $i,j$ for 
generic distinct indices from the set $\{1,2,3\}$. 
For any $\ma{h}\in\NN^3$, we let 
\begin{align}\label{eq:farm} 
\mathsf{\Lambda}({\ma{h}})&=
 \{ \x \in \ZZ^2 : h_i \mid L_i(\x)\}, \\
\rho(\ma{h})&=
  \# \big(\mathsf{\Lambda}({\ma{h}})\cap  [0,h_1h_2h_3)^2 \big).
  \label{defrho}
\end{align}
It is clear that $\mathsf{\Lambda}(\ma{h})$ defines an integer
sublattice of rank $2$.  
In what follows let $\mcal{R}$ always denote a compact subset of 
$\RR^2$ 
whose boundary is a
piecewise continuously
differentiable closed curve with length
$$  \partial(\mcal{R})\leq 
\sup_{\x\in\mcal{R}}\max\{|x_1|,|x_2|\}.
$$
This is in contrast to our earlier investigations \cite{4linear,
  L1L2Q}, where a hypothesis of this sort is automatically satisfied
by working with closed convex subsets of $\RR^2$.
Let $\ma{d}, \ma{D}\in \NN^3$ such
that $d_i\mid D_i$.  
We shall procure Theorems~\ref{t:d} and
\ref{t:unary} through an
analysis of the auxiliary sum
\begin{equation}
   \label{eq:Sj}
S(X;\ma{d},\ma{D})=\sum_{\substack{\x\in \sfl(\ma{D})\cap
     X\mcal{R}}}
\tau\Big(\frac{L_1(\x)}{d_1}\Big)
\tau\Big(\frac{L_2(\x)}{d_2}\Big)\tau\Big(\frac{L_3(\x)}{d_3}\Big),
\end{equation}
where $X\mcal{R}= \{X\x: \x\in \mcal{R}\}.$  We will also assume that
$L_i(\x)>0$ for $\x \in \mcal{R}$. 

Before revealing our estimate for $S(X;\ma{d},\ma{D})$ we will first need to introduce some
more notation. 
We write
\begin{equation}
  \label{eq:Linf}
L_\infty=L_\infty(L_1,L_2,L_3)=\max \{\|L_1\|,
\|L_2\|,\|L_3\|\},
\end{equation}
where $\|L_i\|$ denotes the maximum modulus of the coefficients of
$L_i$. We will set 
\begin{align}
  \label{eq:rinf}
r_\infty&=r_\infty(\mcal{R})=\sup_{\x\in\mcal{R}}\max\{|x_1|,|x_2|\},\\
  \label{eq:ri'}
r'&=r'(L_1,L_2,L_3,\mcal{R})
=\max_{1\leq i\leq 3}\sup_{\x\in\mcal{R}} \{L_i(\ma{x})\}.
\end{align}
These are positive real numbers by assumption. 
Furthermore, let $D=D_1D_2D_3$ and let $\delta(\ma{D})\in \NN$ denote the largest 
$\delta\in \NN$ for which $\sfl(\ma{D})\subseteq \{\x \in \ZZ^2:
\delta\mid \x\}$. Bearing this notation in mind we will establish the following result
in \S \ref{s:2} and \S \ref{s:3}.

\begin{thm}\lab{main1}
Let $\varepsilon>0$ and let $\theta \in (\frac{1}{4},1)$. 
Assume that $r' X^{1-\theta}\geq 1$.
Then there exists a polynomial $P\in \RR[x]$ of degree $3$ such
that
\begin{align*}
S(X;\ma{d},\ma{D})=~& \vol(\mcal{R})X^2P(\log X)  \\
&+
O_\ve\Big( 
\frac{D^\ve  L_\infty^{2+\ve} r_\infty^\ve }{\delta(\ma{D})} 
\big(r_\infty r'^{\frac{3}{4}}+ 
r_\infty^2  \big)X^{\frac{7}{4}+\ve}
\Big),
\end{align*}
where the coefficients of $P$ have modulus 
$O_\ve (D^\ve L_\infty^\ve r_\infty^\ve(1+r'^{-1})^\ve 
(\det \sfl(\ma{D}))^{-1}).$
Moreover, the leading coefficient of $P$ 
is $C=\prod_p \sigma_p(\ma{d},\ma{D})$, with  
\begin{equation}
    \label{defsigma'}
\sigma_p(\ma{d},\ma{D}) =\Big(1-\frac{1}{p}\Big)^3
\sum_{\mnu\in\ZZ_{\geq 0}^3} \frac{\rho
(p^{N_1},p^{N_2},p^{N_3})}{p^{2N_1+2N_2+2N_3}}
\end{equation}
and $
N_i=\max\{v_p(D_i),\nu_i+v_p(d_i)\}.
$
\end{thm}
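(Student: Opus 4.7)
My plan is to follow the strategy of \cite{4linear, L1L2Q}, with the product $L_1L_2L_3$ of three linear forms replacing the four linear forms (resp.\ two linear forms and an irreducible quadratic) handled there. The key opening move is a three-fold application of Dirichlet's hyperbola method: for parameters $Y_1,Y_2,Y_3$ to be optimised later, I write
\[
\tau\bigl(L_i(\mathbf{x})/d_i\bigr)
=2\!\!\sum_{\substack{e_i\mid L_i(\mathbf{x})/d_i\\ e_i\le Y_i}}\!\!1-\delta_i(\mathbf{x},Y_i),
\]
the boundary contribution $\delta_i$ being benign. Expanding the product $\prod_i \tau(L_i(\mathbf{x})/d_i)$ decomposes $S(X;\mathbf{d},\mathbf{D})$ into $O(1)$ sub-sums of the shape
\[
\sum_{\mathbf{e}\in\mathcal{E}}\#\bigl\{\mathbf{x}\in\sfl(\mathbf{D})\cap X\mathcal{R}_{\mathbf{e}} : d_ie_i\mid L_i(\mathbf{x})\text{ for }i=1,2,3\bigr\},
\]
where $\mathcal{R}_{\mathbf{e}}\subseteq\mathcal{R}$ is cut down by the inequalities $L_i(X\mathbf{x})\le d_iY_ie_i$ arising in the dissection.

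For fixed $\mathbf{e}$ the divisibility conditions intersect $\sfl(\mathbf{D})$ in a further sublattice $\sfl^*(\mathbf{e})$ whose index is controlled, via the Chinese remainder theorem and \eqref{defrho}, by $\rho(d_1e_1,d_2e_2,d_3e_3)$ together with gcd-corrections involving the $D_i$. The truncated region $X\mathcal{R}_{\mathbf{e}}$ has boundary length $\ll Xr_\infty$ by virtue of the standing hypothesis $\partial(\mathcal{R})\le r_\infty$; this is exactly the point at which I must work harder than in \cite{4linear, L1L2Q}, whose convexity assumption is no longer available. A Davenport-type lattice point estimate then gives
\[
\#\bigl(\sfl^*(\mathbf{e})\cap X\mathcal{R}_{\mathbf{e}}\bigr)
=\frac{\vol(X\mathcal{R}_{\mathbf{e}})}{\det\sfl^*(\mathbf{e})}+O\!\left(\frac{Xr_\infty}{\lambda_1(\sfl^*(\mathbf{e}))}+1\right).
\]

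Summing the main terms over $\mathbf{e}$ produces a truncated triple Dirichlet series in the $e_i$. Thanks to the multiplicativity of $\rho$, the untruncated version factorises as an Euler product, and a Mellin--Barnes contour shift past the triple pole at $s=1$ yields the polynomial $P(\log X)$ of degree $3$; normalising the three copies of $\zeta(s)$ reproduces the factors $(1-p^{-1})^3$ appearing in the local factor $\sigma_p(\mathbf{d},\mathbf{D})$ of \eqref{defsigma'}. The upper bound for the coefficients of $P$ then drops out of standard convexity estimates for the Dirichlet series together with the trivial bound $\rho(\mathbf{h})\le h_1h_2h_3$.

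The hard step is aggregating the lattice-point error over $\mathbf{e}\in\mathcal{E}$. One groups the $\mathbf{e}$ by the value of $\det\sfl^*(\mathbf{e})$ and uses a divisor bound to count lattices with prescribed determinant, then trades off the hyperbola range (contributing $\ll Y_1Y_2Y_3\cdot r_\infty X^{1+\varepsilon}$) against the complementary tail $e_i>Y_i$ recovered via the complementary divisor $L_i(\mathbf{x})/(d_ie_i)$ (contributing $\ll X^{2+\varepsilon}r'^{3/2}\cdot r_\infty/(Y_1Y_2Y_3)$). Balancing at $Y_1Y_2Y_3\asymp X^{1/2}r'^{3/4}$ produces exactly the stated error $r_\infty r'^{3/4}X^{7/4+\varepsilon}$, while the secondary term $r_\infty^2X^{7/4+\varepsilon}$ absorbs the $O(1)$ contributions and those coming from lattices with exceptionally small first minimum. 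The hypothesis $r'X^{1-\theta}\ge 1$ with $\theta>\tfrac14$ is precisely what guarantees that the optimal $Y_i$ exceed one, so that the hyperbola dissection is non-trivial.
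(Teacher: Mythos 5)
Your broad strategy -- a hyperbola dissection of each $\tau$, reduction to lattice-point counts in truncated regions, and an Euler product for the main term -- matches the paper in outline, but several of the load-bearing steps are either wrong as stated or missing, and they are exactly the steps that make this theorem non-trivial.

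First, the opening identity $\tau(n)=2\sum_{e\mid n,\,e\le Y}1-\delta(n,Y)$ with $\delta$ ``benign'' is not what the paper uses and is not innocuous. For fixed $Y$ this identity forces $\delta$ to count divisors in the interval $[n/Y,\,Y]$, a genuinely large contribution that must be handled on equal footing with the small divisors; declaring it benign hides the problem rather than solving it. The paper instead uses a clean \emph{partition} $\tau=\tau_++\tau_-$, splitting divisors at $\sqrt{X'}$ with the complementary divisors parametrised by $e<L_i(\x)/\sqrt{X'}$, which produces the $8$ subsums $S_{\pm,\pm,\pm}$ over $\ma{d}\le\sqrt{X'}$ with truncated regions $\mcal{S}_{\ma{d}}$. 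There is no residual correction term to worry about.

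Second, your error analysis does not reach the stated exponent. You sum term-by-term Davenport errors of size $Xr_\infty/\lambda_1(\sfl^*(\mathbf{e}))+1$, propose to control the exceptional $\mathbf{e}$ by ``grouping by determinant and using a divisor bound,'' and then balance two terms $Y_1Y_2Y_3\cdot r_\infty X^{1+\ve}$ against $X^{2+\ve}r'^{3/2}r_\infty/(Y_1Y_2Y_3)$. Balancing those two expressions at $Y_1Y_2Y_3\asymp X^{1/2}r'^{3/4}$ gives $r_\infty r'^{3/4}X^{3/2+\ve}$, \emph{not} $r_\infty r'^{3/4}X^{7/4+\ve}$; your own arithmetic contradicts the exponent you claim. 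The actual source of $X^{7/4}$ is the \emph{averaged} level-of-distribution estimate (Lemma~\ref{LOD}, a variant of \cite[Lemma~3]{4linear} and \cite[Lemma~3.2]{D99}), applied with all $Q_i=\sqrt{X'}$: the key is the $\sqrt{Q}$ saving in $MX(\sqrt{Q}+\max Q_i)+Q$, which comes from non-trivial averaging over $\ma{d}$ (Cauchy--Schwarz and careful handling of lattices with small first minimum), not from a naive determinant count. Simply grouping by $\det\sfl^*(\mathbf{e})$ and invoking a divisor bound does not recover this saving.

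Third, the entire reduction from general $(\ma{d},\ma{D})$ to the special case $\ma{d}=\ma{D}=\mathbf{1}$ is missing. The paper devotes all of \S\ref{s:3} to this: choosing a minimal basis for $\sfl(\ma{D})$, substituting $\x\mapsto v_1\ma{e}_1+v_2\ma{e}_2$ to convert $S(X;\ma{d},\ma{D})$ into $S(X)$ for new forms $M_i$, tracking how $L_\infty$, $r_\infty$, $r'$, $L_*$, $\vol$ transform, dispatching the range $\max\{d_i\}>X^{1/4}$ by the trivial bound \eqref{eq:tata2}, and finally extracting the factor $\delta(\ma{D})^{-1}$ in the error via the rescaling $S(X;\cdot\,;L_1,L_2,L_3,\mcal{R})=S(X;\cdot\,;\delta L_1,\delta L_2,\delta L_3,\delta^{-1}\mcal{R})$. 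None of this appears in your proposal, and it is also where the hypothesis $r'X^{1-\theta}\ge 1$ with $\theta>\tfrac14$ is genuinely used (to guarantee $r'(M_1,M_2,M_3,\mcal{R}_{\ma{M}})X^{1-\psi}\ge1$ after the change of variables), not merely to ensure the hyperbola parameters exceed $1$.

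Finally, your proposal to obtain $P(\log X)$ by a Mellin--Barnes contour shift past a triple pole is a legitimate alternative in spirit, but the paper's route is elementary: it writes $\rho(\ma{d})/(d_1d_2d_3)=(1*h)(\ma{d})$, bounds the tail of the $h$-sum multiplicatively via Lemma~\ref{lem:rho} and Lemma~\ref{lem:tail}, and deduces the polynomial with explicit control on all coefficients in terms of $L_\infty$, $L_*$. Your sketch would still need to justify the analytic continuation and the claimed bounds on coefficients of $P$, which do not simply ``drop out of standard convexity estimates.''
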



While the study of the above sums is
interesting in its own right, it turns out that there are useful 
connections to conjectures of Manin and his collaborators \cite{bm} 
concerning
the growth rate of rational points on Fano varieties.  
Consider for example the bilinear hypersurface
$$
W_s:\quad x_0y_0+\cdots +x_sy_s=0
$$
in $\PP^s\times \PP^s.$   This defines a flag variety and 
it can be embedded in
$\PP^{s(s+2)}$ via the Segre embedding $\phi$. 
Let $U_s\subset W_s$ be the open subset on which $x_iy_j\neq 0$ for
$0\leq i,j\leq n$. 
If $H:
\PP^{s(s+2)}(\QQ)\rightarrow \RR_{>0}$ is the usual exponential height 
then we wish to analyse the counting function
\begin{align*}
N(B)
&=\#\{v\in U_s(\QQ): H(\phi(v))\leq B\}\\
&=\frac{1}{4}\#\{
(\x,\y)\in \ZZ_{*}^{s+1}\times \ZZ_{*}^{s+1}: 
~\max |x_iy_j|^s\leq B, ~\x.\y=0\},
\end{align*}
as $B\rightarrow \infty$,
where $\ZZ_*^k$ denotes the set of primitive vectors in $\ZZ^k$ with non-zero
components.
 It follows from work of Robbiani
\cite{robby} that there is a constant $c_s>0$ such that 
$N(B)\sim c_sB\log B$, for $s\geq 3$, which thereby confirms the Manin
conjecture in this case.  This is established using the Hardy--Littlewood circle
method. Spencer \cite{spencer} has given a substantially
shorter treatment, which also handles the case $s=2$. 
By casting the problem in terms of a restricted divisor sum in 
\S  \ref{s:bil}, we will modify the proof of 
Theorem \ref{main1} to provide an independent proof of Spencer's
result in the case $s=2$.

\begin{thm}\label{thm:bil}
For $s=2$ we have 
$N(B)=cB \log B+O(B)$, with 
$$
c=\frac{12}{\zeta(2)^2}
\prod_p
\Big(1+\frac{1}{p}\Big)^{-1}\Big(1+\frac{1}{p}+\frac{1}{p^2}\Big).
$$ 
\end{thm}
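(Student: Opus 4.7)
The plan is to recast $N(B)$ as a restricted divisor sum over the three linear forms $L_1(d_1,d_2)=d_1$, $L_2(d_1,d_2)=d_2$ and $L_3(d_1,d_2)=d_1+d_2$, and then adapt the proof of Theorem~\ref{main1} to this restricted setting. Since $\max_{i,j}|x_iy_j| = |\mathbf{x}|_\infty|\mathbf{y}|_\infty$, the Segre height condition $\max|x_iy_j|^2\leq B$ becomes $|\mathbf{x}|_\infty|\mathbf{y}|_\infty \leq B^{1/2}$. M\"obius inversion on the primitivity conditions for $\mathbf{x}$ and $\mathbf{y}$ gives
$$
4N(B) = \sum_{k,\ell\geq 1} \mu(k)\mu(\ell)\, M\bigl(B^{1/2}/(k\ell)\bigr),
$$
where $M(T)$ counts sextuples $(\mathbf{x},\mathbf{y})\in(\ZZ\setminus\{0\})^3\times(\ZZ\setminus\{0\})^3$ satisfying $\mathbf{x}\cdot\mathbf{y}=0$ and $|\mathbf{x}|_\infty|\mathbf{y}|_\infty \leq T$. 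Setting $d_i = x_iy_i$ turns the bilinear equation into $d_0+d_1+d_2=0$, so that the three products $d_0,d_1,d_2$ become the values of $L_1,L_2,L_3$ at $(d_1,d_2)$.

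I would next perform a dyadic decomposition in the scales $|\mathbf{x}|_\infty \asymp U$, $|\mathbf{y}|_\infty \asymp V$ with $UV\leq T$. For each dyadic pair, inclusion--exclusion on the indices attaining $|\mathbf{x}|_\infty$ and $|\mathbf{y}|_\infty$ writes the contribution to $M(T)$ as a linear combination of sums of the shape
$$
\sum_{(d_1,d_2)\in\mathcal{R}_{U,V}} \tau_{U,V}(L_1)\tau_{U,V}(L_2)\tau_{U,V}(L_3),
$$
where $\tau_{U,V}(d):=\#\{(x,y)\in(\ZZ\setminus\{0\})^2 : xy=d,\ |x|\leq U,\ |y|\leq V\}$ is a restricted divisor function and $\mathcal{R}_{U,V}\subset\RR^2$ is a region of area $\asymp(UV)^2$ whose boundary meets the hypothesis on $\mathcal{R}$ in Theorem~\ref{main1}. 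Since $\tau_{U,V}$ has mean order $O(1)$ rather than $O(\log)$, adapting the argument of Theorem~\ref{main1} to this restricted setting produces a main term $C_{U,V}\cdot(UV)^2$ with no polylogarithmic factor in $UV$, together with an error of size $O_\ve\bigl((UV)^{7/4+\ve}\bigr)$.

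Summing the main terms over the $\asymp\log T$ dyadic pairs $(U,V)$ with $UV\leq T$ supplies the single factor of $\log T$ responsible for $M(T)\sim c\,T^2\log T$. Taking the M\"obius sum in $(k,\ell)$ then produces the $\zeta(2)^{-2}$ factor, while the cumulative error $O(B^{7/8+\ve}\log B)$ is absorbed into the claimed $O(B)$. A prime-by-prime analysis of the local densities, aggregated exactly as in Theorem~\ref{main1}, recovers the Euler product $\prod_p(1+1/p)^{-1}(1+1/p+1/p^2)$. The hardest step will be adapting the proof of Theorem~\ref{main1} to restricted divisor functions while retaining the $7/4$ exponent in the error, as this is precisely what is needed to keep the cumulative error within $O(B)$ after the dyadic summation.
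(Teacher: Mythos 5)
Your reduction to restricted divisor sums over the linear forms $L_1(d_1,d_2)=d_1$, $L_2(d_1,d_2)=d_2$, $L_3(d_1,d_2)=d_1+d_2$ (via $d_i = x_iy_i$) is exactly the paper's starting point, and the M\"obius step to handle primitivity is also broadly the same (the paper defers it to Spencer's reduction while you do it up front). The genuine gap lies in the proposed dyadic decomposition. The height condition $|\x|_\infty|\y|_\infty\leq T$ describes a hyperbolic region in the $(|\x|_\infty,|\y|_\infty)$ plane, and the constraint cuts straight through the dyadic boxes $|\x|_\infty\in[U,2U)$, $|\y|_\infty\in[V,2V)$ with $UV\asymp T$. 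Those edge boxes carry a contribution $\asymp (UV)^2\asymp T^2$ each and there are $\asymp\log T$ of them, so they \emph{dominate} the count; replacing the true hyperbolic cutoff by the all-or-nothing condition $UV\leq T$ over- or under-counts these boxes by a quantity of the same order as their entire contribution. The resulting error is $\asymp T^2\log T$, which is the same size as the main term, so the dyadic scheme cannot recover the constant $c$. (A secondary slip: the number of dyadic pairs $(U,V)$ with $UV\leq T$ is $\asymp(\log T)^2$, not $\asymp\log T$; the single $\log T$ in the answer comes from the geometric decay of $(UV)^2$, not from the number of pairs.)

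What the paper does instead is replace the dyadic slicing by a continuous parametrization. After Spencer's reduction to a fixed ``which coordinate is largest'' configuration with $|\ma{v}|=v_0\leq X^{1/2}$, the sum $N_1(X)$ is rewritten as a divisor sum $R(X)$ in which the restriction is encoded by an explicit region $V\subset[0,1]^6$ in the logarithmic coordinates $\epsilon_i=\log e_i/\log X$, $\xi_i=\log|L_i(\x)|/\log X$; the region $V$ packages the inequalities $e_2,e_3<e_1\leq X^{1/2}$ and $e_1\cdot L_i(\x)/e_i\leq X$ that precisely express the hyperbola constraint. Lemma~\ref{LOD} is then applied with this restricted region, and the archimedean boundary contributions are controlled by the integrals $\mathcal{I}_1,\mathcal{I}_2,\mathcal{I}_3$, which keep the error at $O(X^2)$ below the $X^2\log X$ main term. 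If you want to salvage your outline, you must carry out the slicing exactly rather than dyadically---either by conditioning on the exact value of $|\ma{v}|_\infty$ (as the paper does) or by an Abel summation in the continuous variable---so that the hyperbola boundary is treated without a box-level error.
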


\section{Theorem \ref{main1}: special case}\label{s:2}

Our proof follows the well-trodden paths of \cite[\S
 4]{4linear} and \cite[\S\S 5,6]{L1L2Q}. 
We will begin by establishing a version of Theorem \ref{main1} when
 $d_i=D_i=1$. Let us write $S(X)$ for the sum in this special
 case. In \S \ref{s:3} we shall establish the general case by reducing the
 situation to this  case via a linear change of variables. 

Recall that the linear forms under consideration are not necessarily primitive. 
We therefore fix integers $\ell_i$ such that $L_i^*$ is a primitive linear
form, with 
\begin{equation}\label{ecritprim}
L_i=\ell_iL_i^*.
\end{equation}
It will be convenient to define the least common multiple
\begin{equation}\label{eq:Lg}
L_*=[\ell_1,\ell_2,\ell_3].
\end{equation}
Let $\varepsilon>0$ and assume that $r' X^{1-\psi}\geq 1$
for some parameter $\psi \in (0,1)$. 
Throughout our work 
we will follow common practice and allow the small parameter
$\ve>0$ to take different values at different parts of the argument,
so that $x^\ve \log x\ll_\ve x^\ve$, for example. 
In this section we will show that 
there exists a polynomial $P\in \RR[x]$ of degree $3$ such
that 
\begin{equation}
  \label{eq:24}
\begin{split}
S(X)
=~& \vol(\mcal{R})X^2P(\log X)\\
&+
O_\ve\big(L_\infty^\ve r_\infty^\ve r'^{\frac{3}{4}}
(r_\infty+L_*^{\frac{1}{2}}\vol(\mcal{R})^{\frac{1}{2}}
\big)X^{\frac{7}{4}+\ve}, 
\end{split}
\end{equation}
where the leading coefficient of $P$ is $\prod_p \sigma_p$, with  
\begin{equation}
    \label{defsigma}
\sigma_p =\Big(1-\frac{1}{p}\Big)^3
\sum_{\mnu\in\ZZ_{\geq 0}^3} \frac{\rho
(p^{\nu_1},p^{\nu_2},p^{\nu_3})}{p^{2\nu_1+2\nu_2+2\nu_3}}.
\end{equation}
Moreover, 
the coefficients of $P$ have modulus $O_\ve(L_\infty^{\ve}
r_\infty^\ve(1+r'^{-1})^\ve)$. 

As a first step we deduce from the trivial bound for the divisor
function the estimate
\begin{equation}
    \label{majSXtriv}
S(X) \ll_\ve  L_\infty^\varepsilon r_\infty^{2+\ve}X^{2+\ve}.
\end{equation}
We will also need to record the inequalities 
\begin{equation}
    \label{prelim}
\frac{r'}{2L_\infty}\leq r_\infty\leq 2r' L_\infty,\quad \vol(\mcal{R})\leq 
4r_\infty^2.
\end{equation}
The lower bounds for $r_\infty$ and $4r_\infty^2$ are trivial. To see
the remaining bound 
we suppose that $L_i(\x)=a_ix_1+b_ix_2$. 
Let $\Delta_{i,j}=a_ib_j-a_jb_i$ denote the resultant of $L_i,L_j$. By
hypothesis $\Delta_{i,j}$ is a non-zero integer. We have 
$$
x_1=\frac{b_j L_i(\x)-b_iL_j(\x)}{\Delta_{i,j}}, \quad
x_2=\frac{a_iL_j(\x)-a_j L_i(\x)}{\Delta_{i,j}}, 
$$
for any $i,j$. It therefore follows that 
$r_\infty\leq 2 r' L_\infty$, as required for \eqref{prelim}.

The technical tool underpinning the proof of \eqref{eq:24} is an
appropriate ``level of distribution'' result. 
Recall the definitions \eqref{eq:farm} and \eqref{defrho}. 
The following  is a trivial modification of the proofs of 
\cite[Lemma 3]{4linear} and \cite[Lemma 3.2]{D99}.

\begin{lem}\label{LOD}
Let $\varepsilon>0$. Let $X\geq 1$, $Q_i \geq 2$ and 
$Q =Q_1 Q_2 Q_3$. Then there exists an absolute constant $A>0$
such that
\begin{align*} 
  \sum_{\substack{\ma{d}\in\NN^3\\ d_i\le Q_i}}
\Big|\#\big(\mathsf{\Lambda}({\ma{d}})\cap X\mcal{R}_{\ma{d}}\big)
  &-  \frac{\vol(X\mcal{R}_{\ma{d}}) \rho(\bd) }{(d_1d_2d_3)^2}
\Big| \\
& \ll_\ve  L_\infty^{\varepsilon}(M X(\sqrt{Q}+\max Q_i)+Q)(\log Q)^{A} ,
\end{align*}
where $\mcal{R}_{\ma{d}}\subseteq \mcal{R}$ is any 
compact set depending on $\ma{d}$
whose boundary is a
piecewise continuously
differentiable closed curve of length at most 
$M$.
\end{lem}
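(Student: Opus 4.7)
The plan is to follow the standard rank-$2$ lattice-point counting approach developed for the analogous results in \cite[Lemma 3]{4linear} and \cite[Lemma 3.2]{D99}, adapting the boundary analysis to accommodate regions with piecewise $C^1$ (rather than convex) boundary.

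For each fixed $\bd$ with $d_i\leq Q_i$, one observes that $\sfl(\bd)\subseteq \ZZ^2$ is a full-rank sublattice which contains $(d_1d_2d_3)\ZZ^2$ and has covolume $(d_1d_2d_3)^2/\rho(\bd)$; this last fact is immediate from the identification $\rho(\bd) = [\sfl(\bd):(d_1d_2d_3)\ZZ^2]$. A standard rank-$2$ lattice-point count using a Minkowski-reduced basis then yields
$$
\#\big(\sfl(\bd)\cap X\mcal{R}_\bd\big) = \frac{\vol(X\mcal{R}_\bd)\rho(\bd)}{(d_1d_2d_3)^2} + O\left(\frac{MX}{\lambda_1(\bd)} + 1\right),
$$
where $\lambda_1(\bd)$ is the first successive minimum of $\sfl(\bd)$. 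Since the boundary contribution depends only on the total length $M$ of $\partial\mcal{R}_\bd$ and not on convexity, this step carries over verbatim under the weaker hypothesis assumed here.

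The crux of the argument is bounding $\sum_{d_i\leq Q_i} 1/\lambda_1(\bd)$. I would carry out a prime-by-prime analysis of $\sfl(\bd)\otimes\ZZ_p$, extracting a multiplicative lower bound for $\lambda_1(\bd)$ whose size depends on the valuations $v_p(d_i)$ and on the reductions of $L_1,L_2,L_3$ modulo $p$. For the ``generic'' $\bd$, where the three local conditions are independent, this bound is of order $(\det\sfl(\bd))^{1/2}$, and after standard divisor-function estimates these $\bd$ contribute $\ll\sqrt{Q}(\log Q)^A$. The degenerate $\bd$---those for which primes shared among the $d_i$ collapse one local condition into another---produce the $\max Q_i$ term, arising because one lattice direction can then be as short as $O(1)$ while the complementary direction absorbs essentially the entire covolume. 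Combined with the $+1$ error from each $\bd$, which sums to the claimed $Q$ contribution, this completes the estimate once one multiplies by $MX$.

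The principal obstacle is precisely this local-to-global lattice analysis and the accompanying divisor sum estimation. However, sums of essentially this shape are already treated in \cite[Lemma 3]{4linear} and \cite[Lemma 3.2]{D99}, so only cosmetic modifications are needed. The factor $L_\infty^\ve$ in the final bound accommodates the divisor counts of the coefficients of the $L_i$ that appear when passing between the $L_i$ and their primitive reductions.
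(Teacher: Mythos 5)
Your overall framing---reduce to a lattice-point count for each fixed $\bd$, note that the boundary error $O(MX/\lambda_1(\bd)+1)$ depends only on the length $M$ of $\partial\mcal{R}_\bd$ and not on convexity, and invoke \cite{4linear} and \cite{D99} for the divisor-sum machinery---does match the paper, which simply declares the lemma a ``trivial modification'' of those two references. The identity $\det\sfl(\bd)=(d_1d_2d_3)^2/\rho(\bd)$ and the lattice-point count via a reduced basis are both correct, and you correctly identify that the only genuine modification is replacing convexity by the boundary-length hypothesis.

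Where the sketch goes astray is in the internals of bounding $\sum_{\bd}1/\lambda_1(\bd)$. A ``prime-by-prime analysis of $\sfl(\bd)\otimes\ZZ_p$, extracting a multiplicative lower bound for $\lambda_1(\bd)$'' is not viable: $\lambda_1$ is the Euclidean length of the shortest nonzero vector in $\sfl(\bd)\subset\RR^2$, a global Archimedean invariant that does not factor over primes. While $\det\sfl(\bd)$ is multiplicative in the $d_i$, $\lambda_1$ can sit anywhere in $[1,(\det\sfl(\bd))^{1/2}]$ depending on Diophantine rather than local data. The argument both references actually run is a \emph{double count} over the shortest primitive vector $\mathbf{v}=\mathbf{v}_\bd$: since $|\mathbf{v}|\ll(\det\sfl(\bd))^{1/2}\leq(d_1d_2d_3)^{1/2}\leq\sqrt{Q}$, one sums $1/|\mathbf{v}|$ over primitive $\mathbf{v}$ of size $\ll\sqrt{Q}$, weighted by the number of $\bd$ with $\mathbf{v}\in\sfl(\bd)$, which is $\leq\prod_i\tau(|L_i(\mathbf{v})|)$ whenever all $L_i(\mathbf{v})\neq 0$; a dyadic decomposition then yields $\ll_\ve L_\infty^\ve\sqrt{Q}(\log Q)^A$. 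Your diagnosis of the $\max Q_i$ term is likewise misdirected: it does not arise from ``primes shared among the $d_i$,'' but from the at most three primitive vectors $\mathbf{v}\in\ker L_i$, for which $L_i(\mathbf{v})=0$ makes the condition $d_i\mid L_i(\mathbf{v})$ vacuous, so $d_i$ ranges freely over $[1,Q_i]$ and contributes a factor $Q_i$ instead of a divisor count. It is exactly this degenerate case that forces $\max Q_i$ into the bound.
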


Recall the definition of $r'$ from 
\eqref{eq:ri'}.
In what follows it will be convenient to set
$$
X'=r' X.
$$
For any $1\leq i \leq 3$ and $\x \in X\mcal{R}$ we have 
\begin{equation}\label{eq:bone}
\begin{split}
\tau(L_i(\x))
&=
\sum_{\substack{d\mid L_i(\x)\\ d\leq \sqrt{X'}}}1+
\sum_{\substack{d\mid L_i(\x)\\ d> \sqrt{X'}}}1\\
&=
\sum_{\substack{d\mid L_i(\x)\\ d\leq \sqrt{X'}}}1+
\sum_{\substack{e\mid L_i(\x)\\ e\sqrt{X'}< L_i(\x)}}1\\
&=\tau_+(L_i(\x))+\tau_-(L_i(\x)),
\end{split}
\end{equation}
say. In this way we may produce a decomposition into $8$ subsums
\begin{equation}\label{eq:star}
S(X)=\sum S_{\pm,\pm,\pm}(X),
\end{equation}
where
$$
S_{\pm,\pm,\pm}(X)=\sum_{\x \in \ZZ^2\cap X\mcal{R}} \tau_\pm(L_1(\x))
\tau_\pm(L_2(\x))\tau_\pm(L_3(\x)).
$$
Each sum $S_{\pm,\pm,\pm}(X)$ is handled in the same way. Let us treat
the sum $S_{+,+,-}(X)$, which is typical.

On noting that $L_i(\x)\leq X'$ for any $\x \in X\mcal{R}$ we deduce
that 
$$
S_{+,+,-}(X)= \sum_{d_1,d_2,d_3\leq \sqrt{X'}}\#(\sfl(\ma{d})\cap
\mcal{S}_{\ma{d}}),
$$
where 
$\mcal{S}_{\ma{d}}$ is the set of $\x\in X\mcal{R}$ for which
$d_3\sqrt{X'}< L_3(\x)$.
To estimate this sum we apply Lemma
\ref{LOD} with $Q_1=Q_2=Q_3=\sqrt{X'}$. This gives
\begin{align*}
S_{+,+,-}(X)-&
\sum_{d_1,d_2,d_3\leq \sqrt{X'}} 
  \frac{\rho(\bd) \vol (\mcal{S}_{\ma{d}})}{(d_1d_2d_3)^2}\\
&\ll_\ve  L_\infty^\ve r_\infty^\ve \big(r_\infty
r'^{\frac{3}{4}} X^{\frac{7}{4}+\ve}
+r'^{\frac{3}{2}}X^{\frac{3}{2}+\ve}\big), 
\end{align*}
since $\partial(\mcal{R})\leq r_\infty$.
If $r'^{\frac{3}{4}}\leq r_\infty X^{\frac{1}{4}}$ then this error term
is satisfactory for \eqref{eq:24}. Alternatively, if
$r'^{\frac{3}{4}}> r_\infty X^{\frac{1}{4}}$, then the conclusion
follows from  \eqref{majSXtriv} instead. 
It remains to analyse the main term, the starting point for which is
an analysis of the sum
\begin{equation}
  \label{eq:MM}
M(\mathbf{T})=\sum_{d_i\leq T_i} 
  \frac{\rho(\bd) }{(d_1d_2d_3)^2},
\end{equation}
for $T_1,T_2,T_3\geq 1$. We will establish the following result.

\begin{lem}\label{lem:Masy}
Let $\ve>0$ and $T=T_1T_2T_3$.
Then there exist $c,c_{i,j},c_k,c_0 \in \RR$,
with modulus $O_\ve(L_\infty^\ve)$, such that 
\begin{align*}
  M(\mathbf{T})=~&
c\prod_{i=1}^3\log T_i +
\sum_{1\leq i<j\leq 3}c_{i,j} (\log T_i)(\log T_j)
+\sum_{1\leq k\leq 3}c_{k} \log T_k +c_0\\
&+O_\ve(L_\infty^\ve L_*^{\frac{1}{2}} \min\{T_1,T_2,T_3\}^{-\frac{1}{2}}T^\ve),
\end{align*}
where $L_*$ is given by \eqref{eq:Lg} and 
\begin{equation}\label{eq:moon}
c=\prod_p \sigma_p.
\end{equation}
\end{lem}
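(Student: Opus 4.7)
The plan is to extract the asymptotic behaviour of $M(\mathbf{T})$ via the associated three-variable Dirichlet series
$$
F(\mathbf{s}) = \sum_{\bd \in \NN^3} \frac{\rho(\bd)}{(d_1 d_2 d_3)^2 \, d_1^{s_1} d_2^{s_2} d_3^{s_3}},
$$
combined with Perron-type inversion. The first step is to verify that $\bd \mapsto \rho(\bd)$ is multiplicative, which follows from the Chinese Remainder Theorem applied to the congruence conditions defining $\sfl(\bd)$. Consequently $F$ admits an Euler product $F = \prod_p F_p$ with local factors given by the inner sum in \eqref{defsigma}.

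Next, I would compute the local factors $F_p$ explicitly by enumerating the local lattice points, treating separately the primes $p \nmid L_*$ (where the forms $L_i$ remain linearly independent and primitive modulo $p$) and the finitely many primes $p \mid L_*$. In both cases a direct count yields a factorisation
$$
F(\mathbf{s}) = \zeta(s_1+1) \zeta(s_2+1) \zeta(s_3+1) \, H(\mathbf{s}),
$$
where $H$ is holomorphic on the tube $\{\Re(s_i) > -\tfrac{1}{2}\}$ and satisfies $H(\mathbf{0}) = \prod_p \sigma_p = c$, matching \eqref{eq:moon}. A careful bookkeeping of the local factors shows that $H(\mathbf{0}) \ll_\ve L_\infty^\ve$, while on the shifted tube $\Re(s_i) = -\tfrac{1}{2}+\ve$ one obtains the cruder bound $H(\mathbf{s}) \ll_\ve L_\infty^\ve L_*^{1/2}$, the excess $L_*^{1/2}$ coming entirely from the exceptional local factors at primes dividing $L_*$. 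The same strategy yields the analogous bounds on all low-order Taylor coefficients of $H$ at $\mathbf{0}$.

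Then I would apply a multi-variable truncated Perron formula, invoked successively in each variable as in \cite[\S 2]{4linear}, to represent $M(\mathbf{T})$ as a triple contour integral of $F(\mathbf{s}) \prod_i T_i^{s_i}/s_i$ along $\Re(s_i) = \ve$, modulo a truncation error. Shifting each contour leftwards past $s_i = 0$ produces the residue at the double pole in each variable (coming from the simple pole of $\zeta(s_i+1)$ combined with $1/s_i$), which collectively contribute a polynomial of total degree $3$ in $\log T_1, \log T_2, \log T_3$. The coefficients are polynomial expressions in the Laurent data of $\zeta$ at $s=1$ and the Taylor coefficients of $H$ at $\mathbf{0}$, and hence are bounded by $O_\ve(L_\infty^\ve)$; the leading coefficient is $H(\mathbf{0}) = c$.

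Finally, the residual integrals along the shifted contours $\Re(s_i) = -\tfrac{1}{2}+\ve$ are estimated using the standard convexity bound for $\zeta$ on vertical lines together with the bound for $H$ above, selecting the order in which contours are shifted so as to pull down the factor $(\min\{T_1,T_2,T_3\})^{-1/2}$ rather than $T^{-1/6}$ in each coordinate. This produces an error of the claimed shape $O_\ve(L_\infty^\ve L_*^{1/2} (\min\{T_1,T_2,T_3\})^{-1/2} T^\ve)$. The main technical obstacle is the precise analysis of the local factors $H_p$ at the bad primes $p \mid L_*$ needed to secure the $L_*^{1/2}$ dependence of the error term; beyond that, the argument is standard Perron-style contour manipulation.
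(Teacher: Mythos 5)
Your proposal is correct in spirit but follows a genuinely different route from the paper. You propose to package the sum into a three-variable Dirichlet series $F(\mathbf{s})$, peel off $\zeta(s_1+1)\zeta(s_2+1)\zeta(s_3+1)$, and then extract the asymptotics by truncated Perron inversion and contour shifting in each variable, using the convexity bound for $\zeta$ on vertical lines to control the shifted integrals. The paper instead proceeds entirely elementarily: it writes $f(\ma{d}) = \rho(\ma{d})/(d_1 d_2 d_3) = (1 * h)(\ma{d})$ as a Dirichlet convolution (so $h = \mu * f$ in three variables), interchanges the order of summation to get $M(\ma{T}) = \sum_{k_i \leq T_i} \frac{h(\ma{k})}{k_1 k_2 k_3} \prod_i \sum_{e_i \leq T_i/k_i} \frac{1}{e_i}$, inserts the elementary asymptotic $\sum_{n \leq x} \frac{1}{n} = \log x + \gamma + O(x^{-1/2})$ in each inner sum, and finally uses Rankin's trick to extend the $\ma{k}$-sum to infinity with acceptable error. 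The work is entirely carried by Lemma~\ref{lem:tail}, which bounds $\sum |h(\ma{k})| / (k_1^{1-\delta_1} k_2^{1-\delta_2} k_3^{1-\delta_3}) \ll_\ve L_\infty^\ve L_*^{\delta_1+\delta_2+\delta_3}$ via Euler products and Lemma~\ref{lem:rho}; this plays exactly the role of your proposed bound on $H(\mathbf{s})$ and its Taylor coefficients near $\mathbf{s} = \mathbf{0}$ and on the shifted tube. So the arithmetic input you identify is the right one, and your factorisation $F = \prod_i \zeta(s_i + 1) \cdot H$ is precisely the Dirichlet-series reformulation of the convolution $f = 1 * h$. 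What the paper's method buys is that there is no need for Perron truncation estimates, convexity bounds for $\zeta$, analytic continuation of $H$ past $\Re(s_i) = 0$, or the delicate bookkeeping of mixed residue-plus-integral terms when shifting three contours in sequence (which, if done naively by shifting all three to $\Re(s_i) = -\tfrac12 + \ve$, would give $T^{-1/2}$ rather than the stated $\min\{T_i\}^{-1/2}$, forcing you to be selective about which contours to shift — a complication the convolution method sidesteps automatically, since the elementary inner-sum error and the Rankin tail both naturally localise to $\max_i\{k_i/T_i\}^{1/2}$). Your route would work with care, but it is heavier machinery for the same endpoint.
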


Before proving this result we first show how it leads to 
 \eqref{eq:24}. For ease of notation we write $d_3=z$
and $f(z)=\vol(\mcal{S}_{\ma{d}})$. Let $Q=\sqrt{X'}.$ 
Since $f(Q)=0$, it follows from partial
summation that
$$
\sum_{d_1,d_2,d_3\leq Q} 
  \frac{\rho(\bd) \vol (\mcal{S}_{\ma{d}})}{(d_1d_2d_3)^2}
=-\int_{1}^Q f' (z) M(Q,Q,z) \d z,
$$
in the notation of \eqref{eq:MM}.
An application of Lemma \ref{lem:Masy} reveals that there exist constants
$c,a_1,\ldots,a_5\ll_\ve L_\infty^\ve$ such that 
\begin{align*}
  M(Q,Q,z)=~&
c(\log Q)^2(\log z)+a_1(\log Q)^2+a_2(\log Q)(\log z)\\
&+a_3\log Q +a_4
\log z +a_5 +O_\ve(L_\infty^\ve L_*^{\frac{1}{2}} z^{-\frac{1}{2}}Q^\ve),
\end{align*}
with $c$ given by \eqref{eq:moon}. However we claim that
$$
f'(z)\ll \vol(\mcal{R})^{\frac{1}{2}} QX.
$$ 
To see this we suppose that $L_3(\x)=a_3x_1+b_3x_2$ with $|a_3|\geq
|b_3|.$ Then 
\begin{align*}
-f'(z)
&=\lim_{\Delta\rightarrow 0} \Delta^{-1}\vol\{\x\in X\mcal{R}:
 zQ<L_3(\x)\leq (z+\Delta)Q\}\\
&=\lim_{\Delta\rightarrow 0} \Delta^{-1}\vol\left\{
\left(y_1,
\frac{y_2+zQ-a_3y_1}{b_3}\right)\in X\mcal{R}:  0<y_2\leq \Delta
Q\right\}\\
&\ll Q\vol(X\mcal{R})^{\frac{1}{2}},
\end{align*}
on making the change of variables $y_1=x_1$ and $y_2=L_3(\x)-zQ$.
This therefore establishes the claim and we see that the error term contributes
\begin{align*}
&\ll_\ve 
L_\infty^\ve L_*^{\frac{1}{2}}  Q^\ve \int_{1}^Q |f' (z)| 
z^{-\frac{1}{2}} \d z\\
&\ll_\ve L_\infty^\ve L_*^{\frac{1}{2}}   \vol(\mcal{R})^{\frac{1}{2}} 
Q^{\frac{3}{2}+\ve}X\\
&\ll_\ve L_\infty^\ve r_\infty^\ve L_*^{\frac{1}{2}}
 \vol(\mcal{R})^{\frac{1}{2}} r'^{\frac{3}{4}} X^{\frac{7}{4}+\ve}.
\end{align*}
Moreover, we have 
\begin{align*}
\int_{1}^Q f' (z) \d z = f(1)=
X^2 \vol(\mcal{R})+O(r_\infty QX ),
\end{align*}
and
\begin{align*}
\int_{1}^Q (\log z) f' (z) \d z 
&= -\int_1^Q \frac{f(z)}{z}\d z\\
&= -\int_{\x \in X\mcal{R}}\int_{1<z<Q^{-1}L_3(\x)} \frac{\d z \d \x}{z}\\
&= -\int_{\x \in X\mcal{R}}\big(\log L_3(\x)-\log Q\big) \d \x\\
&= -X^2\vol(\mcal{R}) \log Q+ b X^2,
\end{align*}
for a constant $b\ll_\ve L_\infty^\ve r_\infty^\ve \vol(\mcal{R}) (1+r'^{-1})^\ve$.
Putting everything together we conclude
\begin{align*}
\sum_{d_1,d_2,d_3\leq Q} 
  \frac{\rho(\bd) \vol (\mcal{S}_{\ma{d}})}{(d_1d_2d_3)^2}
=~&
2^{-3}\vol(\mcal{R})X^2P(\log X) \\
&+
O_\ve\big(L_\infty^\ve r_\infty r'^{\frac{1}{2}}X^{\frac{3}{2}+\ve}\big)\\
&+O_\ve\big( 
L_\infty^\ve r_\infty^\ve L_*^{\frac{1}{2}}
 \vol(\mcal{R})^{\frac{1}{2}} r'^{\frac{3}{4}} X^{\frac{7}{4}+\ve}
\big),
\end{align*}
for a suitable polynomial $P\in \RR[x]$ of degree $3$ with 
leading coefficient $\prod_p \sigma_p$ and 
all coefficients having modulus $O_\ve(L_\infty^{\ve}
r_\infty^{\ve}(1+r'^{-1})^\ve)$. 
The error terms in this expression are satisfactory for 
\eqref{eq:24}. 
Once taken in conjunction with the analogous estimates for the
remaining $7$ sums in \eqref{eq:star}, this therefore completes the proof of 
\eqref{eq:24}.

We may now turn to the proof of Lemma \ref{lem:Masy}, which 
rests upon an
explicit investigation of the function
$\rho(\ma{d})$.  Now it 
follows from the Chinese remainder theorem that there is a multiplicativity property
$$
\rho(g_1 h_1,g_2 h_2,g_3 h_3)=\rho(g_1,g_2,g_3)\rho(h_1,h_2,h_3),
$$
whenever $\gcd(g_1 g_2 g_3,h_1 h_2 h_3)=1$. 
Recall that $\Delta_{i,j}$ is used to denote the resultant of
$L_i,L_j$ and set
$$
\D=|\D_{1,2}\D_{1,3}\D_{2,3}|\neq 0.
$$
Recall the definition of $\ell_i$ and $L_i^*$ from \eqref{ecritprim}.
The following result collects together some information about the
behaviour of $\rho(\ma{d})$ at prime powers.

\begin{lem}\label{lem:rho}
Let $p$ be a prime.
Suppose that $\min\{e_i,\nu_p(\ell_i)\}=0$. Then we have 
$$
\rho(p^{e_1},1,1)=p^{e_1}, \quad 
\rho(1,p^{e_2},1)=p^{e_2}, \quad 
\rho(1,1,p^{e_3})=p^{e_3}.
$$
Next suppose that 
$0\leq e_i\leq e_j \leq e_k$ for a permutation $\{i,j,k\}$ of $\{1,2,3\}$.
Then we have
$$
\rho(p^{e_1},p^{e_2},p^{e_3})
\begin{cases}
=p^{2e_i+e_j+e_k}, &\mbox{if $p\nmid \D$,}\\
\leq p^{2e_i+e_j+e_k+\min\{e_j,v_p(\D)\}+\min\{e_k,v_p(\ell_k)\}},
&\mbox{if $p\mid \D$.}
\end{cases}
$$
\end{lem}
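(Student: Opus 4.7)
\textit{Proof plan.} I would begin by converting $\rho$ into a lattice-index computation via $\rho(\ma{h})=(h_1h_2h_3)^2/[\ZZ^2:\sfl(\ma{h})]$, and reducing to the primitive forms through $L_i=\ell_i L_i^*$, so that the condition $p^{e_i}\mid L_i(\x)$ becomes $p^{f_i}\mid L_i^*(\x)$ with $f_i=\max\{0,e_i-v_p(\ell_i)\}$. Since $L_i^*$ is primitive, the single-condition sublattice $\{\x\in\ZZ^2:p^{f_i}\mid L_i^*(\x)\}$ has index $p^{f_i}$ in $\ZZ^2$, whence
\[
\rho(1,\dots,p^{e_i},\dots,1)=p^{2e_i-f_i}=p^{e_i+\min\{e_i,v_p(\ell_i)\}}.
\]
Under the hypothesis $\min\{e_i,v_p(\ell_i)\}=0$ this reduces to $p^{e_i}$, establishing the three initial identities.

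For the remaining statement, assume after relabelling that $e_i\le e_j\le e_k$ and introduce the change of variables $\psi(\x)=(L_i^*(\x),L_j^*(\x))$, whose Jacobian equals $\pm\Delta_{i,j}^*=\pm\Delta_{i,j}/(\ell_i\ell_j)$. When $p\nmid\D$, every $\ell_r$ and every $\Delta_{r,s}^*$ is coprime to $p$, so $\psi$ induces a bijection on $(\ZZ/p^{e_k}\ZZ)^2$. Writing $L_k^*=\alpha L_i^*+\beta L_j^*$ with $\alpha,\beta$ $p$-adic units and putting $(u,v)=\psi(\x)$, the three congruences become $p^{e_i}\mid u$, $p^{e_j}\mid v$ and $p^{e_k}\mid\alpha u+\beta v$. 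Substituting $u=p^{e_i}u'$ and $v=p^{e_j}v'$, the final condition determines $u'$ uniquely modulo $p^{e_k-e_i}$ for each of the $p^{e_k-e_j}$ allowed residues of $v'$; scaling up to a fundamental box of side $p^{e_i+e_j+e_k}$ delivers $\rho(\ma{h})=p^{2e_i+e_j+e_k}$ exactly.

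The bound when $p\mid\D$ is the main technical obstacle, as $\psi$ is no longer bijective modulo $p$. My plan is to decompose $\sfl(\ma{h})=\Lambda_0\cap\sfl(1,1,p^{e_k})$ with $\Lambda_0=\sfl(p^{e_i},p^{e_j},1)$, and to exploit the short exact sequence
\[
0\longrightarrow\sfl(\ma{h})\longrightarrow\Lambda_0\xrightarrow{\;L_k\;}\ZZ/p^{e_k}\ZZ
\]
in order to obtain the factorisation
\[
\rho(\ma{h})=\rho(p^{e_i},p^{e_j},1)\cdot p^{e_k+\min\{e_k,v_p(M_k)\}},
\]
where $M_k=\gcd\{L_k(\v):\v\in\Lambda_0\}$. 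A Smith normal form analysis of the coefficient matrix of the primitive pair $(L_i^*,L_j^*)$---whose invariant factors are $1$ and $|\Delta_{i,j}^*|$---then serves two purposes: it yields the two-form bound $\rho(p^{e_i},p^{e_j},1)\le p^{2e_i+e_j+\min\{e_j,v_p(\Delta_{i,j})\}}$, and, through the explicit $\ZZ$-basis of $\Lambda_0$ it produces, permits direct control of $L_k(\v_r)=\ell_k L_k^*(\v_r)$ via the relation $L_k^*=\alpha L_i^*+\beta L_j^*$ (now with $\alpha,\beta\in\QQ$ of possibly positive $p$-adic valuation). Combining the two estimates and using $v_p(\Delta_{i,j})\le v_p(\D)$, together with elementary manipulations of the nested $\min$'s, delivers the claimed upper bound. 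The main subtlety is the simultaneous control of the two factors, since each may individually exceed the target bound while their combination does not.
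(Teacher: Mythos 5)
Your handling of the first part and the $p\nmid\D$ case is correct and close in spirit to the paper. The paper establishes the $p\nmid\D$ identity by observing that the three divisibility conditions are equivalent to $p^{e_j}\mid\x$ together with $p^{e_k}\mid L_k(\x)$ (via $\D_{j,k}\x$ being an integer combination of $L_j(\x)$ and $L_k(\x)$); you achieve the same count via the change of variables $(L_i^*,L_j^*)$ and the relation $L_k^*=\alpha L_i^*+\beta L_j^*$. Both are exact computations; yours is slightly longer but equally valid, and your lattice-index derivation of $\rho(1,\dots,p^{e_i},\dots,1)=p^{e_i+\min\{e_i,v_p(\ell_i)\}}$ is clean.

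The $p\mid\D$ bound, however, is where your proposal has a genuine gap. Your exact factorisation $\rho(\ma{h})=\rho(p^{e_i},p^{e_j},1)\cdot p^{e_k+\min\{e_k,v_p(M_k)\}}$ is correct, but — as you yourself flag — neither factor can be bounded by the corresponding piece of the target, and the plan stops short of resolving this coupling. Concretely, take $L_1=x_1$, $L_2=px_2$, $L_3=x_1+px_2$ and $e_1=e_2=e_3=1$. Then $\Lambda_0=\{p\mid x_1\}$, $\rho(p,p,1)=p^3$ while your two-form bound gives $p^4$ (slack $p$), and $v_p(M_3)=1>0=v_p(\ell_3)$ so the second factor $p^{e_k+\min\{e_k,v_p(M_k)\}}=p^2$ overshoots the target $p^{e_k}$ by $p$. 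The product $p^5$ matches, but only because of an exact compensation which your argument does not demonstrate in general. To make this work you would need to show that whenever $v_p(M_k)>v_p(\ell_k)$ the index $[\ZZ^2:\Lambda_0]$ is correspondingly larger than $p^{\max\{0,e_i-\dots\}+\max\{0,e_j-\dots\}}$ — i.e., you need a joint analysis of the two factors depending on how $e_j$, $e_k$ compare to $v_p(\D_{j,k})$, $v_p(\ell_k)$. The paper avoids all of this by a single weakening step: inside $\rho(1,p^{e_j},p^{e_k})$ it replaces the condition $p^{e_j}\mid L_j(\x)$ by the weaker $p^{e_j}\mid\D_{j,k}\x$, which converts $\sfl$ into an explicit and easy-to-count superset $\{p^{\max\{e_j-v_p(\D_{j,k}),0\}}\mid\x,\ p^{\max\{e_k-v_p(\ell_k),0\}}\mid L_k^*(\x)\}$. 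A short two-case split (on whether $e_j\ge v_p(\D_{j,k})$) then gives the bound directly, with no need to track the coupling your decomposition produces. I would recommend adopting that weakening; attempting to complete the Smith-normal-form route would require a case analysis at least as long as the paper's, and the "elementary manipulations of the nested $\min$'s" you allude to do not in fact close the gap as written.
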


\begin{proof}
The first part of the lemma is obvious. To see the second part we 
suppose without loss of generality that $e_1\leq e_2\leq e_3$.  

When $p\nmid \D$ we see that 
the conditions $p^{e_i}\mid L_i(\x)$ in 
$\rho(p^{e_1},p^{e_2},p^{e_3})$ are equivalent to $p^{e_2}\mid
\x$ and $p^{e_3}\mid L_3(\x)$.
Thus we conclude that
$$
\rho(p^{e_1},p^{e_2},p^{e_3})=
\#\{\x \bmod{p^{e_1+e_3}}: p^{e_3-e_2}\mid L_3(\x)
\}=p^{2e_1+e_2+e_3},
$$
as required. 

Turning to the case  $p\mid \D$, 
we begin with the inequalities
\begin{align*}
\rho(p^{e_1},p^{e_2},p^{e_3})
&\leq p^{2e_1}
\rho(1,p^{e_2},p^{e_3})\\
&\leq p^{2e_1}
\#\{\x \bmod{p^{e_2+e_3}}: p^{e_2}\mid \D_{2,3}\x,~p^{e_3}\mid L_3(\x)
\}.
\end{align*}
Let us write $\delta=v_p(\D_{2,3})$ and $\la=v_p(\ell_3)$ for short. 
In particular it is clear that $\delta\geq \lambda$.
In this way we deduce that
$\rho(p^{e_1},p^{e_2},p^{e_3})$ is at most
\begin{align*}
p^{2e_1}
\#\{\x \bmod{p^{e_2+e_3}}: p^{\max\{e_2-\delta,0\}}\mid \x,~p^{\max\{e_3-\lambda,0\}}\mid L_3^*(\x)
\}.
\end{align*}
Suppose first that $e_2\geq \delta$. Then $0\leq e_2-\delta\leq
e_3-\lambda$ and it follows that 
\begin{align*}
\rho(p^{e_1},p^{e_2},p^{e_3})
&\leq p^{2e_1}
\#\{\x \bmod{p^{e_3+\delta}}: p^{e_3-\lambda}\mid
p^{e_2-\delta} L_3^*(\x)
\}\\
&= p^{2e_1}\cdot p^{e_3+\delta}\cdot p^{e_2+\lambda}\\
&= p^{2e_1+e_2+e_3+\delta+\lambda},
\end{align*}
since $L_3^*$ is primitive.  Alternatively, if 
$e_2<\delta$, we deduce that 
\begin{align*}
\rho(p^{e_1},p^{e_2},p^{e_3})
&\leq p^{2e_1}
\#\{\x \bmod{p^{e_2+e_3}}: p^{\max\{e_3-\lambda,0\}}\mid L_3^*(\x)
\}\\
&= p^{2e_1+2e_2+e_3+\min\{e_3,\lambda\}}.
\end{align*}
Taking together these two estimates 
completes the proof of the lemma. 
\end{proof}

We now have the tools in place with which to tackle the proof of Lemma
\ref{lem:Masy}. We will argue using Dirichlet
convolution, as in \cite[Lemma 4]{L1L2Q}. Let 
$$
f(\ma{d})=\frac{\rho(\ma{d})}{d_1d_2d_3}
$$
and let $h:\NN^3\rightarrow
\NN$ be chosen so that 
$
f(\ma{d})=(1*h)(\ma{d}),
$
where $1(\ma{d})=1$ for all $\ma{d}\in \NN^3$.  We then have 
$$
h(\ma{d})=(\mu*f)(\ma{d}),
$$
where $\mu(\ma{d})=\mu(d_1)\mu(d_2)\mu(d_3)$. The following result is
the key technical estimate in our analysis of $M(\mathbf{T})$.

\begin{lem}\label{lem:tail}
For any $\ve>0$ and any $\delta_1,\delta_2,\delta_3\geq 0$ such that
$\delta_1+\delta_2+\delta_3<1$, 
we have 
$$
\sum_{\ma{k}\in \NN^3}  \frac{|h(\ma{k})|}{k_1^{1-\delta_1}k_2^{1-\delta_2}k_3^{1-\delta_3}}
\ll_{\delta_1,\delta_2,\delta_3, \ve} L_\infty^\ve L_*^{\delta_1+\delta_2+\delta_3},
$$
where $L_*$ is given by \eqref{eq:Lg}.
\end{lem}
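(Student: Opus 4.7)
The plan is to exploit multiplicativity. Since $\rho$ is multiplicative by the Chinese remainder theorem, so are $f$ and $h=\mu\ast f$, and the target sum factors as an Euler product
\[
\sum_{\ma{k}\in\NN^3} \frac{|h(\ma{k})|}{k_1^{1-\delta_1} k_2^{1-\delta_2} k_3^{1-\delta_3}} = \prod_p E_p,
\]
where
\[
E_p = \sum_{\ma{e}\in\ZZ_{\geq 0}^3} \frac{|h(p^{e_1},p^{e_2},p^{e_3})|}{p^{e_1(1-\delta_1)+e_2(1-\delta_2)+e_3(1-\delta_3)}}.
\]
The task reduces to bounding each $E_p$ individually and then the resulting product.

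For primes $p\nmid\Delta$ (which forces $p\nmid\ell_i$ for each $i$, since $v_p(\ell_i)\leq v_p(\Delta)$), Lemma~\ref{lem:rho} gives $f(p^{\ma{e}}) = p^{\min(e_1,e_2,e_3)}$. A direct computation shows that the three-variable Dirichlet series of $f$ at $p$ has closed form; dividing through by $\prod_i(1-p^{-s_i})^{-1}$ leaves the Euler factor of $h$ as $(1-u)/(1-pu)$ with $u = p^{-s_1-s_2-s_3}$. Since this depends on $u$ alone, power-series expansion reveals that $h(p^{\ma{e}}) = 0$ unless $e_1=e_2=e_3=a$, in which case $h(p^a,p^a,p^a) = \varphi(p^a)$. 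Evaluating yields $E_p = 1 + O_{\delta_1,\delta_2,\delta_3}(p^{-(2-\delta_1-\delta_2-\delta_3)})$, and the hypothesis $\delta_1+\delta_2+\delta_3 < 1$ forces $\prod_{p\nmid\Delta} E_p = O_{\delta_1,\delta_2,\delta_3}(1)$.

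For each bad prime $p\mid\Delta$, Lemma~\ref{lem:rho} yields $f(p^{\ma{e}}) \leq p^{e_{(1)} + \min(e_{(2)}, v_p(\Delta)) + \min(e_{(3)}, v_p(\ell_{\sigma(3)}))}$, where $(e_{(1)},e_{(2)},e_{(3)})$ is the sorted tuple and $\sigma(3)$ the index realizing the maximum. Combined with $|h(p^{\ma{e}})|\leq 8f(p^{\ma{e}})$ (from Möbius inversion) and $v_p(\ell_{\sigma(3)})\leq v_p(L_*)$, one splits the sum defining $E_p$ into four ranges according to whether $e_{(2)}\leq v_p(\Delta)$ and whether $e_{(3)}\leq v_p(L_*)$. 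The inequality $\sum_i\delta_i e_i \leq (\delta_1+\delta_2+\delta_3)e_{(3)}$, together with $\delta_1+\delta_2+\delta_3 < 1$, allows the geometric series in each range to be summed, giving
\[
E_p \leq p^{v_p(L_*)(\delta_1+\delta_2+\delta_3)}\bigl(1 + O_{\delta_1,\delta_2,\delta_3}(v_p(\Delta)^c p^{-\alpha})\bigr)
\]
for some constants $\alpha>0$ and $c\geq 0$. Taking the product over bad primes, the first factors combine to $L_*^{\delta_1+\delta_2+\delta_3}$, while the second is bounded by $L_\infty^\ve$ via $\omega(\Delta)\ll\log L_\infty$, $\tau(\Delta)\ll_\ve L_\infty^\ve$, and $\sum_{p\mid\Delta}1/p\ll\log\log L_\infty$.

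The main obstacle lies in executing the bad-prime case analysis cleanly enough to extract $L_*^{\delta_1+\delta_2+\delta_3}$ rather than a crude $\Delta^{O(1)}$. In particular, absorbing $\min(e_{(3)}, v_p(\ell_{\sigma(3)}))$ into $v_p(\Delta)$ (which would be valid since $v_p(\ell_i)\leq v_p(\Delta)$) would destroy the correct $L_*$-dependence; the factor $v_p(\ell_{\sigma(3)})$ must be tracked separately throughout the range analysis.
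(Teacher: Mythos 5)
Your proposal follows the paper's strategy essentially unchanged: exploit multiplicativity of $h=\mu*f$ to factor the sum as an Euler product, split into good primes $p\nmid\D$ and bad primes $p\mid\D$, feed in Lemma~\ref{lem:rho} for the local densities, and control the bad-prime product via $\omega(\D)$ while tracking $v_p(\ell_k)$ carefully to extract $L_*^{\delta_1+\delta_2+\delta_3}$ rather than a crude power of $\D$. Your good-prime treatment is in fact cleaner than the paper's: computing the local Dirichlet series to find $H_p=(1-u)/(1-pu)$ with $u=p^{-s_1-s_2-s_3}$, and reading off that $h(p^{e_1},p^{e_2},p^{e_3})=0$ off the diagonal with $h(p^a,p^a,p^a)=\varphi(p^a)$, gives a sharp closed form where the paper settles for the cruder $|h(p^{\nu_1},p^{\nu_2},p^{\nu_3})|\leq(1+\nu_1)^2(1+\nu_2)^2(1+\nu_3)^2\,p^{\min\nu_i}$ via $|h|\leq 1*f$. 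You also keep all three $\delta_i$ in play throughout, whereas the paper first reduces to $\delta_2=\delta_3=0$ by $k_1^{\delta_1}k_2^{\delta_2}k_3^{\delta_3}\leq k_1^{\delta_\Sigma}+k_2^{\delta_\Sigma}+k_3^{\delta_\Sigma}$; both are fine.

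One step you state without justification is $|h(p^{\ma{e}})|\leq 8f(p^{\ma{e}})$, attributed to M\"obius inversion. M\"obius inversion gives only $|h|\leq 1*f$; passing to $8f$ requires $f$ to be nondecreasing in each coordinate, which is true but not automatic. It follows from noting that $f(\ma{d})=d_1d_2d_3/\det\sfl(\ma{d})$ equals the order of the image of the natural map $\ZZ^2\to\prod_i\ZZ/d_i\ZZ$, and if $\ma{d}'\mid\ma{d}$ componentwise then the $\ma{d}'$-image is a quotient of the $\ma{d}$-image by a subgroup of order at most $\prod_i d_i/d'_i$, whence $f(\ma{d}')\leq f(\ma{d})$. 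You should either include this observation or simply use $|h|\leq 1*f$ as the paper does. With that repaired, the bad-prime outline you sketch matches the paper's argument and closes the proof.
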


\begin{proof}
On noting that $k_1^{\delta_1}k_2^{\delta_2}k_3^{\delta_3}\leq
k_1^{\delta_\Sigma}+k_2^{\delta_\Sigma}+k_3^{\delta_\Sigma}$, with
$\delta_\Sigma=\delta_1+\delta_2+\delta_3$, it clearly suffices to
establish the lemma in the special case $\delta_2=\delta_3=0$ and
$0\leq \delta_1<1$.

Using the multiplicativity of $h$, our task is to estimate
the Euler product
$$
P=\prod_p \Big(
1+\sum_{\substack{\nu_i\geq 0\\ \bnu\neq \mathbf{0}}}
\frac{|h(p^{\nu_1},p^{\nu_2},p^{\nu_3})|p^{\nu_1\delta_1}}{p^{\nu_1+\nu_2+\nu_3}}
\Big)=\prod_p P_p,
$$
say. Now for any prime $p$, we deduce that
\begin{equation}
  \label{eq:weston}
  |h(p^{\nu_1},p^{\nu_2},p^{\nu_3})|=|(\mu*f)(p^{\nu_1},p^{\nu_2},p^{\nu_3})|\leq 
(1*f)(p^{\nu_1},p^{\nu_2},p^{\nu_3}),
\end{equation}
whence
$$
P_p\leq 1+\sum_{\substack{\al_i,\be_i\geq 0\\
    \bal+\bbe\neq \ma{0}}} \frac{p^{\al_1\delta_1}}{p^{\al_1+\al_2+\al_3}}
\cdot
 \frac{f(p^{\be_1},p^{\be_2},p^{\be_3})p^{\be_1\delta_1}}{p^{\be_1+\be_2+\be_3}}.
$$
We may conclude that the contribution to the above 
sum from $\bal, \bbe$ such that $\bbe=\ma{0}$
is $O(p^{-1+\delta_1})$. 

Suppose now that  $\bbe\neq \ma{0}$,
with $\be_i\leq \be_j\leq \be_k$ for some permutation $\{i,j,k\}$ of
$\{1,2,3\}$ such that $\be_k\geq 1$. 
Then Lemma \ref{lem:rho} implies
that
$$
 \frac{f(p^{\be_1},p^{\be_2},p^{\be_3})p^{\be_1\delta_1}}{p^{\be_1+\be_2+\be_3}}
\leq p^{\be_1\delta_1}\cdot \frac{
p^{\min\{\be_j,v_p(\D)\}+\min\{\be_k,\la_k\}}}{p^{\be_j+\be_k}},
$$
where we have written $\la_k=v_p(\ell_k)$ for short. 
Summing this contribution over 
$\bbe\neq \ma{0}$ we therefore arrive at the contribution
\begin{align*}
&\leq 
\sum_{1\leq k\leq 3}
\sum_{\max\{\be_1,\be_2,\be_3\}=\be_k\geq 1}
p^{\be_1\delta_1}\cdot p^{\min\{\be_k,\la_k\}-\be_k}\\
&\ll
\sum_{1\leq k\leq 3}
\sum_{\be_k\geq 1}
p^{\be_k(\delta_1-1)+\min\{\be_k,\la_k\}}\\
&\ll
p^{\max\{\la_1,\la_2,\la_3\}\delta_1}.
\end{align*}
It now follows that 
$$
\prod_{p\mid \D}P_p\leq \prod_{p\mid \D} \Big(
1+O(p^{-1+\delta_1})+O(p^{\max\{\la_1,\la_2,\la_3\}\delta_1})
\Big) \ll_{\ve} L_\infty^\ve L_*^{\delta_1}, 
$$
where $L_*$ is given by \eqref{eq:Lg}. This is satisfactory for the
lemma. 

Turning to the contribution from $p\nmid \D$, it is a simple matter to
conclude that 
$$
\rho(p^{\nu_1},p^{\nu_2},p^{\nu_3};L_1,L_2,L_3)=
\rho(p^{\nu_1},p^{\nu_2},p^{\nu_3};L_1^*,L_2^*,L_3^*).
$$
Hence Lemma \ref{lem:rho} yields
$
h(p^\nu,1,1)=h(1,p^\nu,1)=h(1,1,p^\nu)=0
$
if $\nu\geq 1$ and $p\nmid \D$, since then 
$f(p^\nu,1,1)=f(1,p^\nu,1)=f(1,1,p^\nu)=1$.
Moreover, we deduce from Lemma \ref{lem:rho} and \eqref{eq:weston} that
for $p\nmid \D$ we have
\begin{align*}
|h(p^{\nu_1},p^{\nu_2},p^{\nu_3})|
&\leq
 (1+\nu_1)(1+\nu_2)(1+\nu_3)\sum_{0\leq n_i\leq
   \nu_i}f(p^{n_1},p^{n_2},p^{n_3})\\
&\leq
 (1+\nu_1)^2(1+\nu_2)^2(1+\nu_3)^2\max_{0\leq n_i\leq \nu_i}f(p^{n_1},p^{n_2},p^{n_3})
\\
&=
 (1+\nu_1)^2(1+\nu_2)^2(1+\nu_3)^2p^{\min\{\nu_1,\nu_2,\nu_3\}}.
\end{align*}
Thus
$$
\prod_{p\nmid \D}P_p=
\prod_{p\nmid \D}\Big(1+\sum_{\bnu}
\frac{(1+\nu_1)^2(1+\nu_2)^2(1+\nu_3)^2p^{\min\{\nu_1,\nu_2,\nu_3\}}}{
p^{\nu_1(1-\delta_1)+\nu_2+\nu_3}}\Big),
$$
where the sum over $\bnu$ is over all $\bnu\in \ZZ_{\geq 0}^3$ such
that  $\nu_1+\nu_2+\nu_3\geq 2$, with at least two of the variables
being non-zero. The overall contribution to the sum arising from
precisely two variables being non-zero is clearly 
$
O( p^{-2}).
$
Likewise, we see that the contribution from all three variables being
non-zero is 
$
O( p^{-2+\delta_1}).
$
It therefore follows that
$$
\prod_{p\nmid \D}P_p=
\prod_{p\nmid \D}\Big(1+O(p^{-2+\delta_1})\Big) \ll_{\delta_1,\ve} L_\infty^\ve,
$$
since $\delta_1<1$. This completes the proof of the lemma. 
\end{proof}

We are now ready to complete the proof of Lemma \ref{lem:Masy}. On
recalling the definition \eqref{eq:MM}, we see that
\begin{align*}
M(\ma{T})=\sum_{d_i\leq T_i} 
  \frac{f(\bd) }{d_1d_2d_3}=
\sum_{d_i\leq T_i} 
  \frac{(1*h)(\bd) }{d_1d_2d_3}=
\sum_{k_i\leq T_i} 
  \frac{h(\ma{k}) }{k_1k_2k_3}\sum_{e_i\leq \frac{T_i}{k_i}}\frac{1}{e_1e_2e_3}.
\end{align*}
Now the inner sum is estimated as 
$$
\prod_{i=1}^3\Big(
\log T_i- \log k_i +\gamma
+O(k_i^{\frac{1}{2}}T_i^{-\frac{1}{2}}) 
\Big).
$$
The main term in this estimate is 
equal to 
$$
\prod_{i=1}^3\log T_i +R(\log T_1,\log T_2,\log T_3),
$$
for a  quadratic polynomial $R \in \RR[x,y,z]$ with
coefficients bounded by $\ll_\ve (k_1k_2k_3)^\ve$ and no non-zero
coefficients of $x^2,y^2$ or $z^2$.
The error term is 
$\ll_\ve T^\ve \max \{k_iT_i^{-1}\}^{\frac{1}{2}}$, with $T=T_1T_2T_3.$
We may therefore apply Lemma
\ref{lem:tail} to obtain  
an overall error  of 
\begin{equation}
  \label{eq:tata1}
\ll_{\ve} L_\infty^\ve
L_*^{\frac{1}{2}}\min \{T_i\}^{-\frac{1}{2}} T^\ve,
\end{equation}
where $L_*$ is given by \eqref{eq:Lg}.

Our next step is to show that the
sums involving $\ma{k}$ can be extended to infinity with negligible
error. If $a\ll_\ve (k_1k_2k_3)^\ve$ is any of the coefficients in our 
cubic polynomial main term, then for $j\in \{1,2,3\}$ Rankin's trick yields
\begin{align*}
\sum_{\substack{\ma{k}\in \NN^3\\ k_j> T_j}} 
  \frac{|h(\ma{k})||a| }{k_1k_2k_3}
&\ll_\ve 
\sum_{\substack{\ma{k}\in \NN^3\\ k_j> T_j}} 
  \frac{|h(\ma{k})| }{(k_1k_2k_3)^{1-\ve}}
<
\frac{1}{T_j^{\frac{1}{2}}}
\sum_{\ma{k}\in\NN^3} 
  \frac{|h(\ma{k})| k_j^{\frac{1}{2}}}{(k_1k_2k_3)^{1-\ve}},
\end{align*}
which Lemma \ref{lem:tail} reveals is bounded by \eqref{eq:tata1}.
We have therefore arrived at the asymptotic formula for $M(\ma{T})$
in Lemma \ref{lem:Masy}, with 
coefficients of size
$O_\ve(L_\infty^\ve)$, as follows from Lemma~\ref{lem:tail}.
Moreover, the leading coefficient takes the shape
$$
\sum_{\ma{k}\in\NN^3} 
  \frac{h(\ma{k}) }{k_1k_2k_3}
=\sum_{\ma{k}\in\NN^3} 
  \frac{(\mu*f)(\ma{k}) }{k_1k_2k_3}
=\prod_p\sigma_p,
$$
in the notation of     \eqref{defsigma}. This therefore concludes the
proof of Lemma \ref{lem:Masy}.

\section{Theorem \ref{main1}: general case}\label{s:3}

Let  $\ma{d},\ma{D}\in \NN^3$, with $d_i \mid D_i$, and 
assume that $r' X^{1-\theta}\geq 1$
for $\theta \in (\frac{1}{4},1)$. 
In estimating $S(X;\ma{d},\ma{D})$, our goal is to replace the summation
over $\sfl(\ma{D})$  by a summation over $\ZZ^2$, in order to relate it
to the sum $S(X)$ that we studied in the previous section. 
We begin  by recording the upper bound
\begin{equation}
  \label{eq:tata2}
  S(X;\ma{d},\ma{D})\ll_\ve 
L_\infty^{\ve}r_\infty^\ve \Big(
\frac{\vol(\mcal{R})X^{2+\ve}}{\det \sfl(\ma{D})}+r_\infty X^{1+\ve}\Big).
\end{equation}
This follows immediately on taking the trivial estimate for the divisor
function and applying standard lattice point  counting results.

Given any basis $\ma{e}_1, \ma{e}_2$ for $\sfl({\ma{D}})$, let 
$M_i(\v)$ be the linear form obtained from
$d_i^{-1}L_i(\x)$ via the change of variables $\x\mapsto v_1\ma{e}_1+v_2\ma{e}_2$.
By choosing $\ma{e}_1,\ma{e}_2$ to be a minimal basis, we may  further assume that 
\begin{equation}
  \label{eq:3011.2}
1\leq |\ma{e_1}|\leq |\ma{e}_2|, \quad |\ma{e}_1||\ma{e}_2| \ll \det \sfl(\ma{D}),
\end{equation}
where $|\ma{z}|=\max|z_i|$ for $\ma{z}\in \RR^2$. 
Write $\ma{M}$ for the matrix formed from $\ma{e}_1, \ma{e}_2$. 
Carrying out this change of variables, we obtain 
\begin{align*}
S(X;\ma{d},\ma{D})
&=\sum_{\substack{\v \in \ZZ^2\cap X\mcal{R}_{\ma{M}}}}
\tau(M_1(\v))\tau(M_2(\v))\tau(M_3(\v)),
\end{align*}
where  $\mcal{R}_{\ma{M}}=\{\ma{M}^{-1}\z: \z\in \mcal{R}\}$.
Note that $M_i(\v)>0$ for every $\v$ in the summation. Moreover,
the $M_i$ will be linearly independent linear forms defined over $\ZZ$
and $\partial(\mcal{R}_{\ma{M}})\leq r_\infty(\mcal{R}_{\ma{M}})$ in
the notation of \eqref{eq:rinf}, where 
$\partial(\mcal{R}_{\ma{M}})$ is the length of the boundary of 
$\mcal{R}_{\ma{M}}.$

We now wish to estimate this quantity. 
 In view of \eqref{eq:3011.2} and the fact that $\det \sfl(\ma{D})=[\ZZ^2:\sfl(\ma{D})]$
 divides $D=D_1D_2D_3$, 
it is clear that
$$
L_\infty(M_1,M_2,M_3)\leq D L_\infty(L_1,L_2,L_3)=DL_\infty,
$$
in the notation of \eqref{eq:Linf}. 
In a similar fashion, recalling the definitions
\eqref{eq:rinf} and \eqref{eq:ri'},  we observe that 
$$
r_\infty(\mcal{R}_{\ma{M}})\ll 
\frac{|\ma{e}_1||\ma{e}_2|}{|\det \ma{M}|} r_\infty(\mcal{R}) \ll
r_\infty(\mcal{R})=r_\infty
$$
and $r'(M_1,M_2,M_3,\mcal{R}_{\ma{M}})
\leq \min \{d_1,d_2,d_3\}^{-1}r'(L_1,L_2,L_3,\mcal{R})\leq r'$.

Note that $r_\infty X\leq r_\infty
r'^{\frac{3}{4}}X^{\frac{7}{4}}$, 
by our hypothesis on $r'$. 
Moreover, since
$$
\det \sfl(\ma{D})=\frac{D^2}{\rho(\ma{D})},
$$
it follows from Lemma \ref{lem:rho} that 
$\det \sfl(\ma{D}) \gg d_k \gcd(d_k,\ell_k)^{-1}$ for any $1\leq k
\leq 3.$ Suppose for the moment that $d_k=\max\{d_i\}> X^{\frac{1}{4}}$.  Then an
application of \eqref{prelim} and \eqref{eq:tata2} easily reveals that 
\begin{equation}\label{eq:sea}
\begin{split}
S(X;\ma{d},\ma{D})
&\ll_\ve L_\infty^{\ve}r_\infty^\ve
 \Big(
\frac{r_\infty^2 X^{2+\ve}\gcd(d_k,\ell_k)}{d_k}+r_\infty
X^{1+\ve}\Big)\\
&\ll_\ve L_\infty^{\ve} r_\infty^\ve 
(r_\infty r'^{\frac{3}{4}}+
L_\infty^{\frac{1}{2}}L_*^{\frac{1}{2}} r_\infty^2) X^{\frac{7}{4}+\ve},
\end{split}
\end{equation}
where $\ell_k$ is defined in \eqref{ecritprim} and $L_*$ by
\eqref{eq:Lg}. 
Alternatively, if $\max\{d_i\}\leq X^{\frac{1}{4}}$ then for any 
$\psi>0$ we have 
$$
r'(M_1,M_2,M_3,\mcal{R}_{\ma{M}})X^{1-\psi}\geq r'
X^{\frac{3}{4}-\psi} \geq r' X^{1-\theta} \geq 1,
$$
provided that $\psi\leq \theta-\frac{1}{4}$. Taking 
$\psi=\theta-\frac{1}{4}\in (0,\frac{3}{4})$ all the hypotheses are
therefore met for an application of \eqref{eq:24}.

To  facilitate this application we  note that 
$\vol(\mcal{R}_{\ma{M}})=|\det
\ma{M}|^{-1}\vol(\mcal{R})$. 
Moreover, if $m_i$ denotes the greatest common divisor of the
coefficients of $M_i$ then 
$
m_i \mid \ell_i \det \sfl(\ma{D}).
$
Hence we have 
$$
L_*(M_1,M_2,M_3)=[m_1,m_2,m_3]\leq [\ell_1,\ell_2,\ell_3] \det
\sfl(\ma{D}) =L_* \det \sfl(\ma{D}),
$$
from which it is clear that 
$$
L_*(M_1,M_2,M_3)^{\frac{1}{2}}\vol(\mcal{R}_{\ma{M}})^{\frac{1}{2}}\leq
L_*^{\frac{1}{2}} \vol(\mcal{R})^{\frac{1}{2}}\leq 2 L_*^\frac{1}{2}r_\infty,
$$
by \eqref{prelim}. 
Finally we recall from above that
$
r'(M_1,M_2,M_3,\mcal{R}_{\ma{M}})\geq (\max\{d_i\})^{-1} r'.
$
Collecting all of this 
together, it now  follows from \eqref{eq:24} and \eqref{eq:sea} that
$$
S(X;\ma{d},\ma{D})
= \frac{\vol(\mcal{R})}{\det \sfl(\ma{D})}X^2 P(\log X) +
O_\ve\big(\mcal{E}\big),
$$
where the leading coefficient of $P$ is $\prod_{p}\sigma_p^*$ and 
$\sigma_p^*$ is defined as for 
$\sigma_p$ in \eqref{defsigma}, but with $\rho(\ma{h};L_1,L_2,L_3)$ replaced by 
$\rho(\ma{h};M_1,M_2,M_3)$, and 
$$
\mcal{E}
=
 D^\ve  L_\infty^{\ve} r_\infty^\ve 
\big(L_*^{\frac{1}{2}}r_\infty r'^{\frac{3}{4}}+ 
L_\infty^{\frac{1}{2}}L_*^{\frac{1}{2}}r_\infty^2  \big)X^{\frac{7}{4}+\ve}.
$$
Furthermore, the coefficients of $P$ are all
$O_\ve(D^\ve L_\infty^\ve r_\infty^\ve(1+r'^{-1})^\ve)$  
in modulus, so that the coefficients of the
polynomial appearing in Theorem \ref{main1} have the size claimed
there. Following the calculations in \cite[\S
6]{4linear} one finds  that 
$$
\frac{1}{\det \sfl(\ma{D})}\prod_{p}\sigma_p^* =
\prod_p \sigma_p(\ma{d},\ma{D}),
$$
in the notation of \eqref{defsigma'}.

Let us write $S(X;\ma{d},\ma{D})=S(X;\ma{d},\ma{D};L_1,L_2,L_3,\mcal{R})$ in
\eqref{eq:Sj} in order to stress the various dependencies.
Recall the notation $\delta=\delta(\ma{D})$ that was introduced prior to the
statement of Theorem \ref{main1}. 
In order to obtain the factor $\delta^{-1}$ in the error term
$\mcal{E}$ we simply observe that 
$$
S(X;\ma{d},\ma{D};L_1,L_2,L_3,\mcal{R})=
S(X;\ma{d},\ma{D};\delta L_1,\delta L_2,\delta
L_3,\delta^{-1}\mcal{R}).
$$
According to \eqref{eq:rinf} and \eqref{eq:ri'},
we see that the value of $r'$ is
left unchanged and  $r_\infty$
should be divided by $\delta$.  However, $L_\infty$ is replaced by
$\delta L_\infty$ and $L_*$ becomes $\delta L_*$. 
On noting that 
$L_*\leq \ell_1\ell_2\ell_3\leq L_\infty^3$, we easily conclude that 
the new error term is
as in Theorem \ref{main1}.
Finally the constants obtained as factors of $X^2(\log
X)^{i}$ in the main term must be the same since they are 
independent of $X$. This therefore concludes the proof of 
Theorem~\ref{main1}.

\section{Treatment of $T(X)$}\label{s:4}

In this section we establish Theorem \ref{t:d}.
For convenience we will assume that the coefficients of $L_1,L_2,L_3$
are all positive so that $L_i(\x)>0$ for all $\x\in [0,1]^2$. The
general case is readily handled by breaking the sum over $\x$ into
regions on which the sign of each $L_i(\x)$ is fixed.
In order to transfigure $T(X)$ into the sort of sum defined in
\eqref{eq:Sj}, we will follow the opening steps of the argument in
\cite[\S 7]{L1L2Q}. This hinges upon the formula 
$$
\tau(n_1n_2n_3)=\sum_{\substack{\ma{e}\in\NN^3\\ e_ie_j\mid
n_k}}\frac{\mu(e_1e_2)\mu(e_3)}{2^{\omega(\gcd(e_1,n_1))+\omega(\gcd(e_2,n_2))}} 
\tau\Big(\frac{n_1}{e_2e_3}\Big)\tau\Big(\frac{n_2}{e_1e_3}\Big)\tau\Big(\frac{n_3}{e_1e_2}\Big),  
$$
which is established in  \cite[Lemma 10]{L1L2Q} and is valid for any
$\ma{n}\in\NN^3$.  In this way we 
deduce that 
\begin{align*}
T(X)=\sum_{\ma{e}\in\NN^3} 
\mu(e_1e_2)\mu(e_3)\sum_{\substack{\ma{k}=(k_1,k_2,k_1',k_2')\in\NN^4\\
    k_ik_i'\mid e_i}}
\frac{\mu(k_1')\mu(k_2')}{2^{\omega(k_1)+\omega(k_2)}} T_{\ma{e},\ma{k} }(X),
\end{align*}
with
\begin{equation*}
  T_{\ma{e},\ma{k} }(X)=\sum_{ \x\in\mathsf{\Lambda}\cap
[0,X]^2}
\tau\Big(\frac{L_1(\x)}{e_2e_3}\Big)
\tau\Big(\frac{L_2(\x)}{e_1e_3}\Big)\tau\Big(\frac{L_3(\x)}{e_1e_2}\Big)
\end{equation*}
and $\sfl=\mathsf{\Lambda}([e_2e_3,k_1k_1'],[e_1e_3,k_2k_2'],e_1e_2)$ 
given by \eqref{eq:farm}.
Under the conditions
$k_ik_i'\mid e_i$ and $|\mu(e_1e_2)|=|\mu(e_3)|=1$,  we clearly have 
$
\mathsf{\Lambda}=\mathsf{\Lambda}{([e_2e_3,k],[e_1e_3,k],e_1e_2) },
$ 
with $k=k_1k_1'k_2k_2'$. Thus $T_{\ma{e},\ma{k} }(X)$ depends only on
$k\mid e_1e_2$.  Noting that $T_{\ma{e},\ma{k} }(X)=0$ unless
$|\ma{e}|\leq X$, 
and 
$$
\sum_{\substack{\ma{k}=(k_1,k_2,k_1',k_2')\in\NN^4\\
    k_ik_i'=\gcd(k,e_i)}}
\frac{\mu(k_1')\mu(k_2')}{2^{\omega(k_1)+\omega(k_2)}} 
=
\frac{\mu(\gcd(k,e_1))\mu(\gcd(k,e_2))}{2^{\omega(\gcd(k,e_1))+\omega(\gcd(k,e_2))}}
=\frac{\mu(k)}{2^{\omega(k)}},
$$
we may therefore write
\begin{equation}\label{T(X)=sume}
T(X)=\sum_{|\ma{e}|\leq X} \mu(e_1e_2)\mu(e_3)\sum_{ k\mid e_1e_2}
\frac{\mu(k)}{2^{\omega(k)}} T_{\ma{e},k }(X),
\end{equation}
with $T_{\ma{e},k }(X)=S(X,\ma{d},\ma{D})$ in the notation of \eqref{eq:Sj}
and
$$
\ma{d}=(e_2e_3,e_1e_3,e_1e_2),\quad
\ma{D}=([e_2e_3,k],[e_1e_3,k],e_1e_2).
$$

For the rest of this section 
we will allow all of our implied constants to depend upon $\ve$ and 
$L_1,L_2,L_3$.  In particular we may clearly 
assume that  $r_\infty=1$, $L_\infty\ll 1$ and $1\leq r'\ll 1.$
Now let $\delta=\delta(\ma{D})$ be the quantity defined in the buildup
to Theorem \ref{main1}. 
A little thought reveals that 
$$
\delta\geq [e_1',e_2',e_3',k'] \gg [e_1e_2,e_3],
$$
since $e_1e_2$ is square-free, where $e_i'=\frac{e_i}{\gcd(e_i,\D_{j,k})}$ and
$k'=\frac{k}{\gcd(k,\D_{1,2})}$ and we recall that $\D_{j,k}$ is the
resultant of $L_j,L_k$.

In view of the inequality $|\ma{e}|\leq X$,
we conclude from  Theorem \ref{main1}
that 
\begin{align*} 
T_{\ma{e},k }(X) =
  X^2P(\log X) +
   O\big( [e_1e_2,e_3]^{-1}X^{\frac{7}{4}+\ve}  \big),
\end{align*}
for a cubic polynomial $P$ with 
coefficients of size $\ll(e_1e_2e_3)^\ve [e_1e_2,e_3]^{-2}$,
since we have $\det \sfl(\ma{D})\geq \delta^2$.
The overall contribution from the error term, once
inserted into \eqref{T(X)=sume}, is
\begin{align*}
&\ll X^{\frac{7}{4}+\ve} 
\sum_{|\ma{e}|\leq X} \frac{|\mu(e_1e_2)\mu(e_3)|}{[e_1e_2,e_3]}\\
&\leq  X^{\frac{7}{4}+\ve} 
\sum_{|\ma{e}|\leq X}
\frac{\gcd(e_1e_2,e_3)}{e_1e_2e_3}\\
&= X^{\frac{7}{4}+\ve}
\sum_{e_1,e_2\leq X} \frac{1}{e_1e_2}\sum_{h\mid e_1e_2} h \sum_{\substack{e_3\leq
    X\\ h\mid e_3}} \frac{1}{e_3}\\
&\ll X^{\frac{7}{4}+\ve}.
\end{align*}
This is clearly satisfactory from the point of view of Theorem \ref{t:d}.
Similarly we deduce that the overall error produced by
extending the summation over $\ma{e}$ to infinity is 
\begin{align*}
&\ll X^{2+\ve} 
\sum_{|\ma{e}|> X} \frac{|\mu(e_1e_2)\mu(e_3)|(e_1e_2e_3)^\ve}{[e_1e_2,e_3]^2}\\
&\ll  X^{\frac{7}{4}+\ve} 
\sum_{\ma{e}\in\NN^3}
\frac{\gcd(e_1e_2,e_3)^2|\ma{e}|^{\frac{1}{4}}}{(e_1e_2e_3)^2}\\
&\ll X^{\frac{7}{4}+\ve}.
\end{align*}
This therefore concludes the
proof of Theorem \ref{t:d}.

\section{Divisor problem on average}\label{s:5}

In this section we prove Theorem \ref{t:unary}.
We begin by writing
$$
\sum_{h\leq H}\big( T_h(X)-c_hX(\log X)^3\big) =\Sigma_1-\Sigma_2,
$$
say, where $c_h$ is given by
\eqref{eq:ch}. The  following result deals with the second term.

\begin{lem}\label{lem:S2}
Let $H\geq 1$. Then we have
$$
\Sigma_2=cXH (\log X)^3 + O\big(XH^{\frac{1}{2}} (\log X)^3\big),
$$
where 
\begin{equation}
  \label{eq:c}
c=\frac{4}{3}\prod_{p>2}\Big(1+\frac{1}{p}\Big)^{-1}\Big(
1+\frac{1}{p}+\frac{1}{p^2}\Big).
\end{equation}
\end{lem}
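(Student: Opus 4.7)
The plan is to reduce the claim to the assertion
$$\sum_{h \leq H} c_h = cH + O(H^{1/2}),$$
from which the estimate for $\Sigma_2$ follows after multiplying through by $X(\log X)^3$. Writing $c_h = A f(h)$ with $A = \frac{11}{8}\prod_p (1-1/p)^2(1+2/p)$ and $f$ the multiplicative function specified by \eqref{eq:cf}, the problem reduces to computing partial sums of $f$.

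First I would analyze the Dirichlet series $F(s) = \sum_n f(n) n^{-s} = \prod_p F_p(s)$ and show that it factors as $F(s) = \zeta(s) G(s)$, where $G$ has an Euler product absolutely convergent in a half-plane containing $s = 1/2$. Concretely, writing $f = 1 \ast g$ so that $G(s) = \sum_n g(n) n^{-s} = \prod_p (1-p^{-s}) F_p(s)$, a direct computation from \eqref{eq:cf} gives $g(p) = 4/p + O(1/p^2)$, while the $\nu$-dependent terms in \eqref{eq:cf} contribute $|g(p^\nu)| \ll \nu/p^\nu$ uniformly for $\nu \geq 2$. The Euler product for $G^{\ast}(s) = \sum_n |g(n)|/n^s$ therefore converges absolutely for $\Re(s) > 0$, and in particular $G^{\ast}(1/2) < \infty$.

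Next, I would apply the Dirichlet hyperbola identity $\sum_{h \leq H} f(h) = \sum_{d \leq H} g(d)\lfloor H/d \rfloor$ in the form
$$\sum_{h \leq H} f(h) = H \sum_{d \leq H} \frac{g(d)}{d} \;-\; \sum_{d \leq H} g(d) \{H/d\}.$$
The second sum is bounded by $\sum_{d \leq H} |g(d)| \leq H^{1/2} G^{\ast}(1/2) \ll H^{1/2}$ via Rankin's trick, while extending the first sum to infinity introduces an error $H \sum_{d > H} |g(d)|/d \ll H \cdot H^{-1/2} G^{\ast}(1/2) \ll H^{1/2}$. This yields the desired
$$\sum_{h \leq H} f(h) = G(1) H + O(H^{1/2}),$$
with $G(1) = \prod_p (1-1/p) \sum_{\nu \geq 0} f(p^\nu)/p^{\nu}$.

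The last step is the purely algebraic identification $c = A \cdot G(1)$. For each odd prime $p$, substituting from \eqref{eq:cf} into the local factor $(1-1/p)^3(1+2/p)\sum_{\nu \geq 0} f(p^\nu)/p^\nu$ and summing the geometric series produced by the constant part of $f(p^\nu)$ together with the collapse of the $\nu$-dependent terms via $\sum_{\nu \geq 1} \nu x^\nu = x/(1-x)^2$ produces the local factor $(1+1/p)^{-1}(1+1/p+1/p^2)$ on the right hand side. The $p=2$ factor is computed separately from the alternate formula in \eqref{eq:cf} and combines with the rational prefactor $\frac{11}{8}$ from $A$ to yield the normalizing constant $\frac{4}{3}$. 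The main obstacle is this last evaluation: the closed-form identity between two Euler products is mechanical but requires careful bookkeeping to track the cancellations among the $(3\nu+4)$, $(3\nu+2)$ and constant parts of $f(p^\nu)$, and to reconcile the anomalous $p=2$ factor with the compact expression for $c$.
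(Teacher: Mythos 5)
Your proposal follows essentially the same route as the paper: factor out $c_1 X(\log X)^3$, write $f = g*1$ with $g = f*\mu$, use the convolution identity with the bound $|g(p^\nu)| \ll \nu p^{-\nu}$ to get $\sum_{h\le H} f(h) = G(1)H + O(H^{1/2})$ via the convergence of $\sum |g(d)|/d^{1/2}$, and then identify $c = c_1\,G(1)$ by evaluating the Euler product. The one place you leave work undone is the final algebraic verification that the local factors collapse to $(1+1/p)^{-1}(1+1/p+1/p^2)$ for $p>2$ and $4/3$ overall; the paper performs this by computing $(f*\mu)(p^k)$ in closed form and summing the resulting series, which is the step you correctly flag as mechanical but needing care.
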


\begin{proof}
We have $\Sigma_2=c_1 X(\log X)^3 S(H)$, where
$c_1$ is given by taking $h=1$ in \eqref{eq:ch},
and 
$
S(H)=\sum_{h\leq H}f(h),
$
with $f$ given multiplicatively by \eqref{eq:cf}. Using the equality
$f=(f*\mu)*1$ and the trivial estimate $[x]=x+O(x^{\frac{1}{2}})$, we
see that
\begin{align*}
S(H)
&=\sum_{d=1}^\infty (f*\mu)(d)\Big[\frac{H}{d}\Big]
=H\sum_{d=1}^\infty \frac{(f*\mu)(d)}{d}
+O\Big(H^{\frac{1}{2}}\sum_{d=1}^\infty \frac{|(f*\mu)(d)|}{d^{\frac{1}{2}}}\Big),
\end{align*}
provided that the error term is convergent.

For $k\geq 1$ we 
have $(f*\mu)(p^k)=f(p^k)-f(p^{k-1})$. Hence we 
calculate
$$
(f*\mu)(p^k)=
\begin{cases}
\frac{1}{p^k}\cdot
\frac{1+3k-\frac{3k}{p}-\frac{3+3k}{p^2}+\frac{3k+2}{p^3}}{(1
+\frac{2}{p})(1-\frac{1}{p})^2},
&\mbox{if $p>2$},\\
\frac{1}{2^k}\cdot (1+\frac{15k}{11}),
&\mbox{if $p=2$},
\end{cases}
$$
for $k\geq 2$, and 
$$
(f*\mu)(p)=
\begin{cases}
\frac{1}{p}\cdot
\frac{4+\frac{5}{p}}{1+\frac{2}{p}},
&\mbox{if $p>2$},\\
\frac{13}{11},
&\mbox{if $p=2$}.
\end{cases}
$$
In particular it is clear that $|(f*\mu)(p^k)|\ll kp^{-k}$, whence 
$$
\sum_{d=1}^\infty \frac{|(f*\mu)(d)|}{d^{\frac{1}{2}}} \ll_\ve  
\sum_{d=1}^\infty d^{-\frac{3}{2}+\ve} \ll 1,
$$
for $\ve<\frac{1}{2}$. It follows that 
$
S(H)=c_1'H +O(H^{\frac{1}{2}})$,
where
\begin{align*}
c_1'
&=\prod_p \sum_{k\geq 0} \frac{(f*\mu)(p^k)}{p^k} \\
&=
\frac{64}{33}\prod_{p>2}\Big(1+\frac{2}{p}\Big)^{-1}
\Big(1-\frac{1}{p}\Big)^{-2}\Big(1+\frac{1}{p}\Big)^{-1}
\Big(1+\frac{1}{p}+\frac{1}{p^2}\Big).
\end{align*}
We conclude the proof of the lemma by noting that $c_1c_1'=c$.
\end{proof}

It would be easy to replace the exponent $\frac{1}{2}$ of $H$
by any positive number, but this would not yield an overall
improvement of Theorem \ref{t:unary}. 
We now proceed to an analysis of the sum
$$
\Sigma_1=\sum_{h\leq H}T_h(X)=\sum_{\substack{h\leq H\\ n\leq X}}
  \tau(n-h)\tau(n)\tau(n+h),
$$
in which we follow the convention that $\tau(-n)=\tau(n)$. 
This corresponds to a sum of the type considered in  \eqref{eq:Sj}, 
with  $d_i=D_i=1$ and 
$$
L_1(\x)=x_1-x_2, \quad L_2(\x)=x_1, \quad L_3(\x)=x_1+x_2.
$$
The difference is that we are now summing
over a lopsided region.

\begin{lem}\label{lem:S1}
Let $H \geq 1$ and let $\ve>0$. Then we have 
$$
\Sigma_1=cXH (\log X)^3 + O_\ve\big(XH (\log X)^2+
X^{\frac{1}{2}+\ve}H
+X^{\frac{7}{4}+\ve}\big),
$$
where $c$ is given by \eqref{eq:c}.
\end{lem}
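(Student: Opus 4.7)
My plan: Recast $\Sigma_1$ as a sum of the type \eqref{eq:Sj} with $\ma{d}=\ma{D}=(1,1,1)$, linear forms $L_1(\x)=x_1-x_2$, $L_2(\x)=x_1$, $L_3(\x)=x_1+x_2$, and rectangular summation region $\x\in[0,X]\times[0,H]$ (identifying $\x=(n,h)$). Two features prevent a direct appeal to Theorem \ref{main1}: the form $L_1$ changes sign across the diagonal $n=h$, and after rescaling by $X$ the rectangle $\mcal{R}=[0,1]\times[0,H/X]$ is so lopsided that $\partial(\mcal{R})>r_\infty(\mcal{R})$.

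First I split $\Sigma_1$ by the sign of $n-h$. The diagonal $n=h$ contributes at most $O(H^{1+\ve})$, absorbed into the error. Using the convention $\tau(-m)=\tau(m)$, the sum over $n<h$ becomes a sum with positive forms $L_1'=-L_1$, $L_2$, $L_3$, and crucially the function $\rho$ (hence each local factor $\sigma_p$) is unchanged under $L_1\mapsto -L_1$. After rescaling by $X$, the sum over $n>h$ is supported on the pentagonal region $\mcal{R}_>=\{0\leq x_2\leq H/X,\ x_2<x_1\leq 1\}$ of area $H/X-(H/X)^2/2$, and the sum over $n<h$ on the triangular region $\mcal{R}_<=\{0\leq x_1<x_2\leq H/X\}$ of area $(H/X)^2/2$.

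Next I apply a mild extension of Theorem \ref{main1} to each region. Inspection of the proof of \eqref{eq:24} in Section \ref{s:2} reveals that the hypothesis $\partial(\mcal{R})\leq r_\infty$ is used only to control the boundary-length parameter $M$ in the appeal to Lemma \ref{LOD}; substituting $M=\partial(\mcal{R})$ directly amounts to a bookkeeping adjustment. Both $\mcal{R}_>$ and $\mcal{R}_<$ have $L_\infty$, $L_*$, $r'$ of order $O(1)$, and $\partial(\mcal{R}_\bullet)$ and $\vol(\mcal{R}_\bullet)^{1/2}$ absolutely bounded, yielding a main term $\vol(\mcal{R}_\bullet)X^2 P_\bullet(\log X)$ with cubic $P_\bullet$ of leading coefficient $\prod_p\sigma_p=c$ and subordinate coefficients $O(1)$, plus error $O(X^{7/4+\ve})$. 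Summing both pieces, $\vol(\mcal{R}_>)+\vol(\mcal{R}_<)=H/X$, so the combined main term is $XH\cdot P(\log X)=cXH(\log X)^3+O(XH(\log X)^2)$. The hypothesis $r'X^{1-\theta}\geq 1$ of Theorem \ref{main1} fails for $\mcal{R}_<$ when $H\leq X^{1/4}$; in that regime I bound $\Sigma_1^<$ trivially by $O(H^{2+\ve})\leq X^{1/2+\ve}H$, and note that the unbalanced main-term contribution $(H^2/2)P(\log X)$ from $\mcal{R}_>$ is then also $\leq X^{1/2+\ve}H$.

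It remains to identify $\prod_p\sigma_p$ with $c$ as given by \eqref{eq:c}. Since $|\Delta_{1,2}\Delta_{1,3}\Delta_{2,3}|=2$, Lemma \ref{lem:rho} gives $\rho(p^{\nu_1},p^{\nu_2},p^{\nu_3})=p^{2\nu_{(1)}+\nu_{(2)}+\nu_{(3)}}$ (where $\nu_{(1)}\leq\nu_{(2)}\leq\nu_{(3)}$ are the order statistics) for every prime $p>2$; a short generating-function computation then yields $\sigma_p=(1+1/p)^{-1}(1+1/p+1/p^2)$, matching the odd-prime factors in \eqref{eq:c}. A direct finite computation of $\rho(2^{\nu_1},2^{\nu_2},2^{\nu_3})$ at $p=2$ produces $\sigma_2=4/3$, supplying the global constant. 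The main obstacle is the extension of \eqref{eq:24} to regions with $\partial(\mcal{R})>r_\infty(\mcal{R})$: one must trace every use of the hypothesis through Section \ref{s:2} and confirm that only the boundary-length step in Lemma \ref{LOD} is affected. Subsidiary technical points are the explicit evaluation of $\sigma_2$ and the delicate treatment of the small-$H$ regime where the triangular piece must be handled by trivial estimates.
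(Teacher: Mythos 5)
Your proof reaches the correct conclusion, but it routes differently from the paper. The paper does not split by the sign of $n-h$. Instead it works with the full rectangle $(0,X]\times(0,H]$ directly, using the convention $\tau(-m)=\tau(m)$ together with the fact that the divisibility condition $d_1\mid n-h$ is insensitive to sign, and then re-traces the proof of \eqref{eq:24} for this lopsided region: the decomposition $\tau=\tau_++\tau_-$ is run with $X'=2X$, an ``entirely analogous version of Lemma~\ref{LOD} for our lopsided region'' supplies the error $O_\ve(X^{1/2+\ve}H+X^{7/4+\ve})$, and Lemma~\ref{lem:Masy} with partial summation furnishes the main term. The local computation identifying $c$ with \eqref{eq:c} via Lemma~\ref{lem:rho} and the generating function $S(z)$ is exactly as you describe.

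By contrast, you restore positivity of $L_1$ by splitting the rectangle into the pentagonal piece $\mcal{R}_>$ and the triangular piece $\mcal{R}_<$, and you apply a lightly extended Theorem~\ref{main1} to each. This buys modularity, but it does not actually circumvent the central technical point: after rescaling, $\mcal{R}_>$ still sits inside $[0,1]\times[0,H/X]$, so the internal appeal to Lemma~\ref{LOD} inside Theorem~\ref{main1} hits the very lopsidedness that the paper confronts directly. The split also forces you to treat the small-$H$ range separately for $\mcal{R}_<$, and to account for the fact that $r'(\mcal{R}_<)$ is of order $H/X$ and hence not bounded away from zero; the subordinate coefficients of the polynomial for $\mcal{R}_<$ are then only $O_\ve((X/H)^\ve)$, not $O(1)$ as you assert. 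This imprecision happens not to damage the final estimate, since $\vol(\mcal{R}_<)\ll(H/X)^2$ means the resulting contribution is $\ll H^{2-\ve}X^\ve(\log X)^2\leq XH(\log X)^2$, which is within budget, but it should be stated correctly. In summary, the two proofs share the same engine (Lemmas~\ref{LOD}, \ref{lem:Masy} and \ref{lem:rho}), and differ only in whether the sign of $n-h$ is absorbed into the convention $\tau(-m)=\tau(m)$ or into a split of the summation domain; the paper's choice avoids the case analysis, while yours keeps Theorem~\ref{main1} as a black box at the cost of some extra bookkeeping.
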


\begin{proof}
Tracing through the proof of \eqref{eq:24} one is led to consider
$8$ sums
$$
\Sigma_1^{\pm,\pm,\pm}=
\sum_{\substack{h\leq H\\ n\leq X}}
  \tau_\pm(n-h)\tau_\pm(n)\tau_\pm(n+h),
$$
with $X'=2X$ in the construction \eqref{eq:bone} of $\tau_\pm.$
Arguing as before we examine a typical sum
$$
\Sigma_1^{+,+,-}=
 \sum_{d_1,d_2,d_3\leq \sqrt{2X}}
\#(\sfl(\ma{d})\cap
\mcal{R}_{\ma{d}}(X,H)),
$$
where 
$
\mcal{R}_{\ma{d}}(X,H)=\{\x \in (0,X]\times (0,H]: d_3\sqrt{2X}<L_3(\x)\}.
$ 
An entirely analogous version of Lemma \ref{LOD} for our lopsided region
readily leads to the conclusion that 
\begin{align*}
\Sigma_1^{+,+,-}=
\sum_{d_1,d_2,d_3\leq \sqrt{2X}} 
  \frac{\rho(\bd) \vol (\mcal{R}_{\ma{d}}(X,H))}{(d_1d_2d_3)^2}
+O_\ve\big(X^{\frac{1}{2}+\ve}H+ 
X^{\frac{7}{4}+\ve}\big).
\end{align*}
Combining Lemma \ref{lem:Masy} with partial summation, as previously,
we conclude that 
\begin{align*}
\sum_{d_1,d_2,d_3\leq \sqrt{2X}} 
  \frac{\rho(\bd) \vol (\mcal{R}_{\ma{d}}(X,H))}{(d_1d_2d_3)^2}
=~&XH (\log X)^3 \prod_p \sigma_p\\
&+ O_\ve(XH(\log X)^2+X^{\frac{7}{4}+\ve}),
\end{align*}
with $\sigma_p$ given by \eqref{defsigma}.
This gives  the statement of
the lemma with $c=\prod_p \sigma_p$.

It remains to show that $c$ matches up with \eqref{eq:c}. 
Let $m(\ma{a})=\max_{i\neq j}\{a_i+a_j\}$ for any $\ma{a}\in \ZZ_{\geq 0}^3$.
For $z\in\CC$ such that $|z|<1$ we claim that
\begin{equation}
  \label{eq:car}
  S(z)=\sum_{\nu_1,\nu_2,\nu_3\geq 0} z^{m(\bnu)}=
\frac{1+z+z^2}{(1-z)^2(1-z^2)}.
\end{equation}
But this follows easily from the observation
$$
S(z)=1
+3\sum_{\substack{\nu_1=\nu_2=0\\
\nu_3\geq 1}} z^{\nu_3}
+3\sum_{\substack{\nu_1=0\\
\nu_2,\nu_3\geq 1}} z^{\nu_2+\nu_3}
+z^2S(z).
$$
Now the linear forms that arise in our
analysis have resultants
$
\Delta_{1,2}=1, \Delta_{1,3}=2$ and $\Delta_{2,3}=1.
$
Moreover, $\ell_1=\ell_2=\ell_3=1$ in the notation of \eqref{ecritprim}.
Suppose that $p>2$ and write $z=\frac{1}{p}$.  Then it follows from Lemma \ref{lem:rho} that 
\begin{align*}
\sum_{\mnu\in\ZZ_{\geq 0}^3} \frac{\rho
(p^{\nu_1},p^{\nu_2},p^{\nu_3})}{p^{2\nu_1+2\nu_2+2\nu_3}}
=S(z)=\frac{1+z+z^2}{(1-z)^2(1-z^2)}.
\end{align*}
If $p=2$, it will be necessary to revisit the proof of Lemma
\ref{lem:rho}. To begin with it is clear that 
$\rho(2^{\nu_1},2^{\nu_2},2^{\nu_3})=2^{\nu_1+\nu_2+\nu_3+\min\{\nu_i\}}$
if $\min\{\nu_1,\nu_3\}\leq \nu_2$. If $\nu_2<\nu_i\leq \nu_j$ for
some permutation $\{i,j\}$ of $\{1,3\}$ then 
\begin{align*}
\rho(2^{\nu_1},2^{\nu_2},2^{\nu_3})
=\#\{\x\bmod{2^{\nu_1+\nu_2+\nu_3}}: 2^{\nu_i}\mid \D_{1,3}\x,
~2^{\nu_j}\mid L_j(\x)\}
=2^{\nu_1+2\nu_2+\nu_3+1}.
\end{align*}
Writing $z=\frac{1}{2}$ we obtain
\begin{align*}
\sum_{\mnu\in\ZZ_{\geq 0}^3} \frac{\rho
(2^{\nu_1},2^{\nu_2},2^{\nu_3})}{2^{2\nu_1+2\nu_2+2\nu_3}}
&=
\sum_{\substack{\mnu\in\ZZ_{\geq 0}^3\\
\min\{\nu_1,\nu_3\}\leq \nu_2
 }} z^{m(\bnu)}
+
\sum_{\substack{\mnu\in\ZZ_{\geq 0}^3\\
\min\{\nu_1,\nu_3\}> \nu_2
 }}
z^{m(\bnu)-1}\\
&=S(z)
+
\sum_{\substack{\mnu\in\ZZ_{\geq 0}^3\\
\min\{\nu_1,\nu_3\}> \nu_2
 }}
z^{\nu_1+\nu_3-1}(1-z)\\
&=\frac{1+z+z^2}{(1-z)^2(1-z^2)}+\frac{z}{(1-z)^2(1+z)}.
\end{align*}
Hence, \eqref{defsigma} becomes
$$
\sigma_p =
\begin{cases}
(1+\frac{1}{p})^{-1}(
1+\frac{1}{p}+\frac{1}{p^2}), &\mbox{if $p>2$,}\\
\frac{4}{3},&\mbox{if $p=2$,}
\end{cases}
$$
as required to complete the proof of the lemma. 
\end{proof}

Once combined, Lemmas \ref{lem:S2} and \ref{lem:S1}
yield 
$$
\Sigma_1- \Sigma_2 \ll_\ve XH^{\frac{1}{2}}(\log X)^3+
XH (\log X)^2+
X^{\frac{1}{2}+\ve}H
+X^{\frac{7}{4}+\ve}.
$$
This is $o(XH (\log X)^3)$ for $H \geq X^{\frac{3}{4}+\ve}$,
as claimed in  Theorem \ref{t:unary}.

\section{Bilinear hypersurfaces}\label{s:bil}

In this section we establish Theorem \ref{thm:bil}, for which we 
begin by studying the 
counting function
$$
N_0(X)=\#\{
(\ma{u},\ma{v})\in (\ZZ\setminus\{0\})^6: 
|\ma{v}|\leq v_0\leq X^{\frac{1}{2}}, ~ 
|\ma{u}|\leq v_0^{-1}X, ~\ma{u}.\ma{v}=0\},
$$
for large $X$, where we write $|\x|=\max\{|x_0|,|x_1|,|x_2|\}$ for any
$\x=(x_0,x_1,x_2) \in \RR^3$.  The overall contribution from vectors with
$|v_1|=v_0$ is 
\begin{align*}
&\ll \sum_{|v_2|\leq v_0\leq X^{\frac{1}{2}}}  
\#\{
\ma{u}\in\ZZ^3: ~|\ma{u}|\leq v_0^{-1}X, ~u_0v_0+u_1v_0+u_2v_2=0\}\\
&\ll \sum_{|v_2|\leq v_0\leq X^{\frac{1}{2}}}  
\frac{X^2}{v_0^3}
\ll X^2,
\end{align*}
as can be seen using the geometry of numbers. 
Similarly there is a contribution of $O(X^2)$ to $N_0(X)$ from vectors
for which $|v_2|=v_0$. 
Thus we may conclude that 
$$
N_0(X)=2^3N_1(X)+O(X^2),
$$
where $N_1(X)$ is the contribution to $N_0(X)$ from 
vectors with 
$0<v_1,v_2<v_0$ and $
u_2>0$, 
with the equation $\ma{u}.\ma{v}=0$ replaced by 
$u_0v_0+ u_1v_1=u_2v_2$.

Define the region
$$
V=\Big\{\bal\in [0,1]^6: 
\al_2,\al_3< \al_1\leq \frac{1}{2}, ~\al_1+\al_5-\al_2\leq 1, ~\al_1+\al_6-\al_3\leq 1\Big\},
$$
and set 
$$
 L_1(\x)=x_1, \quad  L_2(\x)=x_2, \quad  L_3(\x)=x_1+ x_2.
$$
We will work with the  region
$
\mcal{R}=\{\x\in [-1,1]^2: x_1x_2\neq 0, ~x_1+x_2>0\}.
$
Then we clearly have 
$
N_1(X)
=R(X),
$ 
with 
$$
R(X)=
\sum_{\substack{\x\in \ZZ^2\cap
     X\mcal{R}}}
\#\left\{
\ma{e}\in \NN^3: e_i \mid L_i(\x),~ (\bep,\bxi)\in V\right\},
$$
where $\bep=(\epsilon_1,\epsilon_2,\epsilon_3), \bxi=(\xi_1,\xi_2,\xi_3)$ and 
$$
\epsilon_i=\frac{\log e_i}{\log X}, \quad \xi_i=
\frac{\log |L_i(\x)|}{\log  X}.
$$
Note that for $V=[0,1]^6$ this sum coincides with \eqref{eq:Sj} for
$d_i=D_i=1$.
We establish an asymptotic formula for 
$R(X)$ along the lines of the proof 
of Theorem~\ref{main1}. 
We will need to 
arrange things so that we are only considering small divisors in the
summand. It is easy to see that the overall contribution to the sum
from $\mathbf{e}$ such that $e_j^2=L_j(\x)$ for some $j\in
\{1,2,3\}$ is 
\begin{align*}
 \ll_\ve X^\ve  \sum_{e_j\leq \sqrt{X}} \#\big\{
\x\in \ZZ^2\cap X\mcal{R}:  ~L_j(\x)=e_j^2\big\}
\ll_\ve X^{\frac{3}{2}+\ve}.
\end{align*}
It follows that we may write
\begin{equation}
  \label{eq:speak}
R(X)=
\sum_{\ma{m}\in \{\pm 1\}^3} R^{(\ma{m})}(X) +O_\ve(X^{\frac{3}{2}+\ve}),
\end{equation}
where $R^{(\ma{m})}(X)$ is the contribution from $
m_i e_i\leq m_i\sqrt{|L_i(\x)|}.
$

We  indicate how to get an asymptotic formula for
$R^{(1,1,-1)}(X)=R^{+,+,-}(X)$, say, which is
typical. Writing $L_3(\x)=e_3f_3$, we see that $f_3\leq
\sqrt{L_3(\x)}$ and  
$$
\epsilon_3=\frac{\log(f_3^{-1}L_3(\x))}{\log X}=\xi_3-\frac{\log f_3}{\log X}.
$$
On relabelling the variables we may therefore write
$$
R^{+,+,-}(X)=
\sum_{\substack{\x\in \ZZ^2\cap
    X\mcal{R}    }}
\#\left\{
\ma{e}\in \NN^3: 
\begin{array}{l}
e_i \mid L_i(\x), ~e_i \leq \sqrt{|L_i(\x)|}\\ 
(\bep,\bxi)\in V^{+,+,-}
\end{array}
\right\},
$$
where 
$$
V^{+,+,-}=\{
(\bep,\bxi)\in \RR^6: 
(\epsilon_1,\epsilon_2,\xi_3-\epsilon_3, \bxi)\in V
\}.
$$
Interchanging the order of summation we obtain
$$
R^{+,+,-}(X)=
\sum_{\substack{\mathbf{e}\in \NN^3}}
\#\left\{
\x\in \sfl(\ma{e})\cap
     X\mcal{R}:
\bxi\in V^{+,+,-}(\mathbf{e})
\right\},
$$
where $\bxi\in V^{+,+,-}(\mathbf{e})$ if and only if 
$(\bep,\bxi)\in V^{+,+,-}$ and 
$2\epsilon_i\leq \xi_i$.

On verifying that the underlying region is a union of two convex
regions, an application of Lemma~\ref{LOD} yields
$$
R^{+,+,-}(X)=\sum_{\substack{\mathbf{e}\in \NN^3}}
\frac{\vol\{\x \in X\mcal{R}: \bxi\in
  V^{+,+,-}(\mathbf{e})\}\rho(\mathbf{e})}
{(e_1e_2e_3)^2}
+O_\ve(X^{\frac{7}{4}+\ve}).
$$
Lemma \ref{lem:rho} implies that 
$$
\frac{\rho(\mathbf{e})}{e_1e_2e_3} =\gcd(e_1,e_2,e_3)=f(\ma{e}),
$$
say, whence
$$
R^{+,+,-}(X)
=
\int_{\x\in X\mcal{R}}
\sum_{\substack{\mathbf{e}\in \NN^3\\ 2\epsilon_i\leq \xi_i}}
\frac{\chi_V(\epsilon_1,\epsilon_2,\xi_3-\epsilon_3, \bxi)
f(\ma{e})}{e_1e_2e_3} \d\x
+O_\ve(X^{\frac{7}{4}+\ve}),
$$
where $\chi_V$ is the characteristic function of the set $V$.
We now write $f=h*1$ as a convolution, 
for a multiplicative arithmetic function $h$.
Opening it up gives
\begin{equation}\label{eq:focus}
R^{+,+,-}(X)=
\sum_{\substack{\ma{k}\in\NN^3}}
\frac{h(\ma{k})}{k_1k_2k_3} 
\int_{\x\in X\mcal{R}} 
M(X)\d \x +O_\ve(X^{\frac{7}{4}+\ve}),
\end{equation}
where for $\kappa_i=\frac{\log k_i}{\log X}$ we set
$$
M(X)=
\sum_{\substack{\mathbf{e}\in \NN^3\\ 2\epsilon_i+2\kappa_i\leq \xi_i}}
\frac{\chi_V(\epsilon_1+\kappa_1,\epsilon_2+\kappa_2,
\xi_3-\epsilon_3-\kappa_3, \bxi)}{e_1 e_2e_3}.
$$
The estimation of $M(X)$ will depend intimately 
on the set $V$. Indeed we wish to show
that $\int M(X)\d\x$ has order $X^2\log
X$, whereas taking $V=[0,1]^6$ leads to a sum with order
$X^2(\log X)^3$.

Writing out the definition of the set $V$ we see that 
$$
M(X)=
\sum_{\substack{e_1\in \NN\\
0\leq \epsilon_1+\kappa_1\leq \frac{1}{2}\\
2\epsilon_1+2\kappa_1\leq \xi_1}}
\frac{1}{e_1}
\sum_{\substack{e_2\in \NN\\
0\leq \epsilon_2+\kappa_2< \epsilon_1+\kappa_1\\
\epsilon_1+\kappa_1+ \xi_2\leq 1+\epsilon_2+\kappa_2\\
2\epsilon_2+2\kappa_2\leq \xi_2}}
\frac{1}{e_2}
\sum_{\substack{e_3\in \NN\\
\xi_3< \epsilon_1+\kappa_1+ \epsilon_3+\kappa_3\leq 1\\
2\epsilon_3+2\kappa_3\leq \xi_3}}
\frac{1}{e_3},
$$
where $\epsilon_i=\frac{\log e_i}{\log X}$,
$\kappa_i=\frac{\log k_i}{\log X}$ and $\xi_i=
\frac{\log |L_i(\x)|}{\log  X}$.
Further thought shows that the outer sum over $e_1$ can actually be
taken over $e_1$ such that 
$$
\frac{\xi_3}{2}< \epsilon_1+\kappa_1 \leq \min\Big\{\frac{1}{2},
\frac{\xi_1 }{2}, 1-\frac{\xi_2}{2}\Big\}.
$$
The inner sums over $e_2,e_3$ can be approximated simultaneously by integrals, giving
$$
\Big(\log X
\int_{\max\{0,\epsilon_1+\kappa_1+\xi_2-1\}}^{\min\{\epsilon_1+\kappa_1, \frac{\xi_2}{2}\}}
\d \tau_2 +O(1)\Big)
\Big(\log X
\int_{\max\{0,\xi_3-\epsilon_1-\kappa_1\}}^{\min\{1-\epsilon_1-\kappa_1, \frac{\xi_3}{2}\}}
\d \tau_3 +O(1)\Big),
$$
after an obvious change of variables.
We see that the overall contribution to $M(X)$ from the error terms is 
\begin{align*}
&\ll 
\log X \int_{\frac{\xi_3}{2}}^{\frac{\xi_1}{2}} \Big(
1+\log X\int_{\xi_2+\tau_1-1}^{\tau_1}\d\tau_2
+\log X\int_{\xi_3-\tau_1}^{\frac{\xi_3}{2}}\d \tau_3\Big)\d\tau_1\\
&= 
(I_1+I_2+I_3)\log X,
\end{align*}
say. Let $\mcal{I}_i$ denote the integral of $I_i \log X$ over 
$\x\in X\mcal{R}$. We see that 
\begin{align*}
\mcal{I}_1
&\leq
\frac{1}{2}
\int_{\{\x\in  X\mcal{R}: ~x_1+ x_2 <|x_1|\}} 
\big(\log |x_1| -\log(x_1+ x_2)\big)
\d\x \ll X^2. 
\end{align*}
Next we note that 
\begin{align*}
\mcal{I}_2
&\ll (\log X)^2 \int_{(\tau_1,\tau_2)\in [0,\frac{1}{2}]^2}
\int_{\{\x\in  X\mcal{R}: ~\xi_3\leq 2\tau_1, ~\xi_2\leq
  1+\tau_2-\tau_1, ~x_2>0\}}
\d\x \d\tau_1 \d\tau_2\\
&\leq (\log X)^4 \int_{(\tau_1,\tau_2)\in [0,\frac{1}{2}]^2} 
\int_{-\infty}^{2\tau_1}\int_{-\infty}^{1+\tau_2-\tau_1} X^{u+v}\d u\d v \d \tau_1 \d \tau_2\\
&= (\log X)^2 \int_{(\tau_1,\tau_2)\in [0,\frac{1}{2}]^2} 
X^{1+\tau_1+\tau_2} \d \tau_1 \d \tau_2 \ll X^2,
\end{align*}
and likewise,  
\begin{align*}
\mcal{I}_3
&\ll (\log X)^2  
\int_{(\tau_1,\tau_3)\in
  [0,\frac{1}{2}]^2}
\int_{\{\x\in  X\mcal{R}: ~
\xi_2\leq 1, ~\xi_3\leq \tau_1+\tau_3, ~x_2>0\}}
\d\x \d\tau_1 \d\tau_3\\
&\leq (\log X)^4 
\int_{(\tau_1,\tau_3)\in
  [0,\frac{1}{2}]^2}
\int_{-\infty}^{1}\int_{-\infty}^{\tau_1+\tau_3} X^{u+v}\d u\d v \d \tau_1 \d \tau_3\ll X^2.
\end{align*}
Interchanging the sum over $e_1$ with the integrals over
$\tau_2,\tau_3$ one uses the 
same sort of argument to show that the 
final summation can be approximated by an
integral.  

This therefore leads to the conclusion that 
$$
\int_{\x\in X\mcal{R}} M(X)
 \d\x =
(\log X)^3
\int_{\x\in X\mcal{R}}
\int_{\substack{2\tau_1\leq \xi_1\\
2\tau_2\leq \xi_2\\
2\tau_3> \xi_3}}
\chi_V(\bta ,\bxi)\d \bta\d\x
+O(X^2),
$$
after an obvious change of variables. 
We insert this into \eqref{eq:focus} and then, on assuming analogous
formulae for all the sums $R^{\pm,\pm,\pm}(X)$, we
sum over all of the
various permutations of $\mathbf{m}$ in 
\eqref{eq:speak}. This gives
$$
R(X)
= c_0I(X)
+O(X^2),
$$
where
$$
c_0=\sum_{\substack{\ma{k}\in\NN^3}}
\frac{h(\ma{k})}{k_1k_2k_3}, \quad
I(X)=
(\log X)^3
\int_{\x\in X\mcal{R}}
\int_{\bta\in \RR^3}
\chi_V(\bta ,\bxi)\d \bta\d\x.
$$
Recalling \eqref{eq:car} we easily deduce that 
\begin{align*}
c_0
&=
\sum_{\ma{a}\in \NN^3}\frac{\mu(a_1)\mu(a_2)\mu(a_3)}{a_1a_2a_3}
\sum_{\ma{b}\in \NN^3} \frac{\gcd(b_1,b_2,b_3)}{b_1b_2b_3}\\
&=
\prod_p \Big(1-\frac{1}{p}\Big)^3
S\Big(\frac{1}{p}\Big)\\
& 
= 
\prod_p
\Big(1+\frac{1}{p}\Big)^{-1}\Big(1+\frac{1}{p}+\frac{1}{p^2}\Big).
\end{align*}

It remains to analyse the term
\begin{align*}
I(X)&=
(\log X)^3\vol\left\{
(\x,\bta)\in \RR^2\times [0,1]^3:
\begin{array}{l}
x_1+x_2>0, ~|x_1|\leq X,\\
\tau_2,\tau_3<\tau_1\leq \frac{1}{2},\\
\frac{\log |x_2|}{\log X}\leq 1+\tau_2-\tau_1,\\
\frac{\log x_1+x_2}{\log X}\leq 1+\tau_3-\tau_1
\end{array}
\right\}\\
&= I^{+,+}(X)+I^{-,+}(X)+I^{+,-}(X),
\end{align*}
where $I^{+,+}(X)$ (resp. $I^{-,+}(X)$, $I^{+,-}(X)$)
is the contribution from $\x,\bta$ such that  $x_1>0$ and $x_2>0$
(resp. $x_1<0$ and $x_2>0$, $x_1>0$ and $x_2<0$). 
In the first integral it is clear that $x_1<x_1+x_2\leq X$
so  that the condition $|x_1|\leq X$ is implied by the
others. Likewise, in the second volume integral we will have 
$x_2>|x_1|$ and so the condition $|x_1|\leq X$ is implied by the
inequalities involving $x_2$. 
An obvious change of variables readily leads to the conclusion that 
$I^{+,+}(X)+I^{-,+}(X)$ is
\begin{align*}
&=
(\log X)^5\int_{\{\bta \in [0,\frac{1}{2}]^3: \tau_2,\tau_3<\tau_1\}}
\int_{-\infty}^{1+\tau_3-\tau_1}
 \int_{-\infty}^{1+\tau_2-\tau_1} 
\hspace{-0.5cm}X^{u+v}\d u\d v \d\bta\\
&=
X^2(\log X)^3\int_{\{\bta \in [0,\frac{1}{2}]^3: \tau_2,\tau_3<\tau_1\}}
X^{\tau_2+\tau_3-2\tau_1}\d\bta\\
&= \frac{1}{2}X^2\log X +O(X^2).
\end{align*}
The final integral $I^{+,-}(X)$ can 
be written as in the first line of the above, but 
with the added constraint that
$X^u+X^v\leq X$ in the inner integration over $u,v$. For 
large $X$ this constraint can be dropped with acceptable
error, which thereby leads to the companion estimate
$$
I^{+,-}(X)
=
\frac{1}{2}X^2\log X +O(X^2).
$$

Putting everything together we have therefore shown that 
$$
N_0(X)=2^3N_1(X)+O(X^2)=8c_0 X^2\log X+O(X^2),
$$
with $c_0$ given above. 
Running through the reduction steps in \cite[\S 5]{spencer} rapidly
leads from this asymptotic formula to the statement of 
Theorem \ref{thm:bil}.

\subsection*{Acknowledgments}
It is pleasure to thank the referee for carefully reading the
manuscript and making numerous helpful suggestions. The author is
indebted to both the referee and Daniel Loughran for pointing out an
oversight in the earlier treatment of Theorem~\ref{main1}.
This 
work is supported by the NSF under agreement
\texttt{DMS-0635607} and
EPSRC grant number
\texttt{EP/E053262/1}. It 
was undertaken while the author was visiting the 
{\em Hausdorff Institute} in Bonn and the 
{\em Institute
for Advanced Study} in Princeton, the hospitality and financial
support of which are gratefully acknowledged.

\end{document}